\newtheorem{theorem}{Theorem}[section]
\newtheorem{proposition}[theorem]{Proposition}
\newtheorem{lemma}[theorem]{Lemma}
\theoremstyle{definition}
\newtheorem{definition}[theorem]{Definition}
\theoremstyle{remark}
\newtheorem{remark}[theorem]{Remark}
\def\N{\mathbb{N}}
\def\Z{\mathbb{Z}}
\def\Q{\mathbb{Q}}
\def\R{\mathbb{R}}
\def\C{\mathbb{C}}
\def\scrG{\mathcal{G}}
\def\scrA{\mathscr{A}}
\DeclareMathOperator\id{id}
\DeclareMathOperator\rk{rk}
\DeclareMathOperator\pr{pr}
\DeclareMathOperator\Hom{Hom}
\DeclareMathOperator\Ad{Ad}
\DeclareMathOperator\ad{ad}
\begin{document}

\title[Connections on central extensions]
{Connections on central extensions, lifting gerbes,\\
and finite-dimensional obstruction vanishing}

\author[I. Biswas]{Indranil Biswas}

\address{School of Mathematics, Tata Institute of Fundamental Research,
Homi Bhabha Road, Mumbai 400005}

\email{indranil@math.tifr.res.in}

\thanks{The first author is partially supported by a J.~C.~Bose Fellowship.}

\author[M. Upmeier]{Markus Upmeier}

\address{The Mathematical Institute, Radcliffe
Observatory Quarter, Woodstock Road, Oxford, OX2 6GG, U.K.}

\email{upmeier@maths.ox.ac.uk}

\thanks{The second author thanks the TIFR, Mumbai, for hospitality during 03/2017 and was partially funded by DFG grant UP~85/2-1 of the priority program SPP~2026 Geometry at Infinity.}

\date{November 17, 2019}

\keywords{Connections, gerbes, central extensions}

\subjclass[2010]{Primary 53C08, 53B05; Secondary 57S15}

\begin{abstract}
Given a central extension of Lie groups, we study the classification
problem of lifting the structure group together with a given connection.
For reductive structure groups we introduce a connective structure
on the lifting gerbe associated to this problem.
Our main result classifies all connections on the central extension
of a given principal bundle. In particular, we find that admissible
connections are in one-to-one correspondence with parallel
trivializations of the lifting gerbe.
Moreover, we prove a vanishing result for
Neeb's obstruction classes for finite-dimensional Lie groups.
\end{abstract}

\maketitle

\setcounter{tocdepth}{1}\tableofcontents

\section*{Introduction}

Consider a central extension of Lie groups
$1 \longrightarrow A \longrightarrow B \longrightarrow
C \longrightarrow 1.$
Given a principal $C$"-bundle ${\pi\,\colon\, P \,\longrightarrow\, X}$, it is a classical problem to seek principal $B$"-bundles ${\rho\,\colon R\,\longrightarrow\, X}$ equipped with an isomorphism
${\sigma \,\colon\, R/A\,\longrightarrow\,P}$ of the structure group reduction to $B$; see Grothendieck~\cite{grothendieck1957quelques, grothendieck1958general} for a solution of the obstruction and classification problem in terms of sheaf cohomology. From a
more geometric point of view one associates a lifting gerbe $\scrG_\pi$ to
this setup, see Murray~\cite{MR2681698} or Brylinski~\cite{Bry}; the 
trivializations of $\scrG_\pi$ (if they exist) are in bijective correspondence with the
central extensions ${\rho\,\colon R\,\longrightarrow\, X}$ of the bundle ${\pi\,\colon\, P \,\longrightarrow\, X}$ up to
an isomorphism.

Now suppose that the principal $C$-bundle $P$ is equipped also with a connection $\omega^\pi$.
There being no further topological obstructions, our main result
describes all connections on the central extension ${\rho\,\colon R\,\longrightarrow\, X}$ in Theorem~\ref{cor12341},
under the assumption that $B$ is a reductive Lie group.
The $A$"-bundle
${\sigma\,\colon\, R\,\longrightarrow\, P}$ is naturally $B$"-equivariant
and before proving Theorem~\ref{cor12341} we
therefore need to develop some basic theory for equivariant connections
in Section~\ref{sec:distortion}. In particular we introduce the \emph{distortion} of a $G$"-connection and establish some elementary results
for the distortion and for quotient connections; see
Propositions~\ref{quotient-exact} and \ref{ExtKES}. This is later applied
to ${G=B}$, though it comes at hardly any extra cost to establish Theorem~\ref{cor12341}
in a fully equivariant context, with a Lie group $G$ acting on the base manifold $X$.

Restricting to admissible connections \eqref{compatible-connection}, we then
apply Theorem~\ref{cor12341} to solve the extension of the lifting problem involving
connections. Using $\omega^\pi$ we introduce in Proposition~\ref{obsExt}
a connection on the lifting gerbe $\scrG_\pi$ and explain in Theorem~\ref{thm:main2} that parallel trivializations of $\scrG_\pi$ correspond
to central extensions equipped with admissible connections. This result
is equivalent to Theorem~\ref{thm:main1}, stated without reference to
gerbes.

Using crossed modules, Neeb associates in \cite{MR2208114}
a cohomology class $H^3(X,\,\C)$ to every central extension
with $A=\C^*$. In the final section we shall address the question,
raised by Neeb \cite[Problem~VI]{MR2208114}, that asks which obstruction
classes may occur. We find in Theorem~\ref{thm:obstructions}
that for finite-dimensional Lie groups $C$ these obstruction
classes always vanish. This is proven using classical algebraic
topology for Lie groups, mostly due to Borel~\cite{MR0051508} and gives a different approach to that of Brylinski \cite[Chapter~6]{Bry}. In infinite dimensions, the
situation is drastically different; see Remark~\ref{rem:infinite-dim}.

\section{Preliminaries}

\subsection{Infinitesimal actions}

Let $A$ be a Lie group with Lie algebra $\mathfrak{a}$.
Let $P\times A\,\longrightarrow\, P$ be a smooth action of $A$ on a manifold $P$. Then we denote the linearized
action by juxtaposition $${\cdot\, \colon\, TP \times TA \,\longrightarrow\, TP}\, .$$ If we regard $A\subset TA$ and 
$P\subset TP$ embedded as the zero section, then we may write
\[
X\cdot \xi = X\cdot a + p\cdot \xi\qquad\forall ~ X\in T_pP,\ \xi \in T_aA\, .
\]
Differentiating the condition for an action gives
\begin{equation}
(X\cdot \xi)\cdot \eta \,=\, X\cdot (\xi\cdot \eta)\qquad \forall ~ X\in T_pP, \xi \in T_aA,
\ \eta\in T_bA\, .
\end{equation}
In particular, this notation applies to $P=A$ acting on itself by right multiplication and
similarly to left actions of $A$.

\subsection{Connections on principal bundles}

By convention, all our principal bundles are smooth and have their structure group 
acting from the right; sometimes the term ``principal bundle'' will be shortened to ``bundle''.

\begin{definition}
Let $\pi\,\colon\, P\,\longrightarrow\, X$ be a principal $A$-bundle. A \emph{connection} $H^{\pi}$ on $\pi$ is an 
$A$"-equivariant complementary subbundle $H^\pi\,\subset\, TP$ of the vertical bundle:
\begin{align*}
	H^\pi\oplus \ker(d\pi) &= TP,
	&H^\pi_{pa}&=H^\pi_p a
	&\forall~ a\in A,\ p\in P\, .
\end{align*}
Equivalently, a connection is a $1$"-form $\omega^\pi \in \Omega^1(P,\mathfrak{a})$ satisfying
\begin{align}\label{connection1form-axioms}
\omega^\pi(p\cdot \xi) &= \xi,
&\omega^\pi(v\cdot a) &= a^{-1}\omega(v)a
& \forall~ p\in P,\, v\in TP,\, \xi \in \mathfrak{a},\, a\in A\, .
\end{align}
\end{definition}

The \emph{horizontal subspace} $H^\pi$ is simply the kernel of $\omega^\pi$, the \emph{vertical subspace}
\begin{equation}\label{vertical-subbundle}
	T^\mathrm{vert}_p P = \ker \pi_{*,p}= p\cdot \mathfrak{a},
\end{equation}
giving an identification of vector bundles $T^\mathrm{vert}P\,=\,
P\times\mathfrak{a}$. The adjoint bundle
\[
 \ad(P) = T^\mathrm{vert}P / A = P\times^A \mathfrak{a},
\]
is associated to $P$ using the adjoint action of $A$ on $\mathfrak{a}$.

Here we recall that the
bundle $P\times^A V \,\longrightarrow\, X$ \emph{associated} to an $A$-space $V$ (also called the \emph{Borel construction}) is the quotient of
$P\times V$ by the twisted diagonal action of $A$; in particular,
the points of $\{(pa,
a^{-1}v)\}_{a\in A}$ are identified in the quotient
$P\times^A V$. When $V=B$ is a group and the action is the left
translation using a homomorphism $A\,\longrightarrow\, B$, then $P\times^A B$ becomes a principal $B$-bundle.

\begin{proposition}\label{fact:pullback-connections}
Given a homomorphism\/ $(F,f)$ of\/ $A$"-bundles as in
\begin{equation}\begin{aligned}
\xymatrix{
P\ar[d]_{\pi}\ar[r]^F&Q\ar[d]^{\sigma}\\
X\ar[r]_f& Y
}
\end{aligned}\end{equation}
we may pull back connections from\/ $\sigma$ to\/ $\pi$ by\/ $H^{\pi}\,\coloneqq \,F_*^{-1}H^{\sigma}$ or, equivalently, by\/ $\omega^{\pi}\coloneqq F^*\omega^\sigma$.
\end{proposition}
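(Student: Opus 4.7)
The plan is to establish the subbundle and $1$-form formulations in parallel and then observe that they describe the same object. First I would differentiate the $A$-equivariance $F(pa) = F(p)a$ to extract two basic identities: $F_*(v\cdot a) = F_*(v)\cdot a$ for $v \in T_pP$ and $a \in A$, and $F_*(p\cdot \xi) = F(p)\cdot \xi$ for $\xi \in \mathfrak{a}$. Combined with $\sigma \circ F = f \circ \pi$, these imply that $F_*$ carries the vertical subbundle \eqref{vertical-subbundle} into the vertical subbundle of $Q$ and restricts there to a fiberwise linear isomorphism, since both fibers are identified with $\mathfrak{a}$ via the $A$-action.

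For the subbundle formulation, $A$-equivariance of $H^\pi := F_*^{-1}H^\sigma$ is immediate from the first identity above together with the $A$-equivariance of $H^\sigma$. For the direct-sum decomposition $H^\pi \oplus \ker(d\pi) = TP$: any vertical vector in $H^\pi$ is mapped by $F_*$ into $H^\sigma \cap T^{\mathrm{vert}}Q = 0$, so it must vanish because $F_*$ is injective on verticals; conversely, given $w \in T_pP$, I would split $F_*(w) = h + F(p)\cdot\xi$ along the decomposition on $Q$ and then write $w = (w - p\cdot \xi) + p\cdot\xi$, where the first summand lies in $H^\pi_p$ by construction.

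For the $1$-form formulation, the two axioms in \eqref{connection1form-axioms} for $\omega^\pi := F^*\omega^\sigma$ follow routinely from the two identities of the first paragraph: $\omega^\pi(p\cdot\xi) = \omega^\sigma(F(p)\cdot\xi) = \xi$ and $\omega^\pi(v\cdot a) = \omega^\sigma(F_*(v)\cdot a) = a^{-1}\omega^\sigma(F_*(v))a$. The equivalence of the two routes then follows from $\ker(F^*\omega^\sigma) = F_*^{-1}\ker(\omega^\sigma) = F_*^{-1}H^\sigma$.

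There is no real obstacle here; the statement is a bookkeeping exercise verifying that $A$-equivariance of $F$ interacts correctly with the two connection axioms. If any step deserves attention it is checking that $F_*$ restricts to a fiberwise isomorphism of vertical subbundles, since this is what makes the horizontal-vertical decomposition pull back cleanly without needing any further hypothesis on $(F,f)$.
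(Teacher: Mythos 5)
Your proof is correct and complete: the two differentiated equivariance identities $F_*(v\cdot a)=F_*(v)\cdot a$ and $F_*(p\cdot\xi)=F(p)\cdot\xi$, together with $\sigma\circ F=f\circ\pi$, are exactly what is needed, and your verification of both the subbundle axioms and the $1$-form axioms \eqref{connection1form-axioms}, plus the identification $\ker(F^*\omega^\sigma)=F_*^{-1}H^\sigma$, is the standard argument. The paper states this proposition without proof, treating it as routine, so there is nothing to contrast with; your write-up is the expected verification.
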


\subsection{Equivariant bundles and connections}

Let $G$ be another Lie group acting smoothly on a manifold $X$.
The space of $G$"-invariant differential forms on $X$ is denoted
$\Omega_G^*(X)$. 

\begin{definition}
A \emph{$G$"-equivariant} principal $A$-bundle is an $A$-bundle ${\pi\colon P \to X}$ together with a 
$G$"-action $G\times P \,\longrightarrow\, P$ satisfying
\begin{align*}
g(pa)&=\,(gp)a,
&\pi(gp)&=\,g\pi(p)
&\forall~ g\in G,\, p\in P,\, a\in A\, .
\end{align*}
\end{definition}

\begin{definition}
A connection $H^\pi\,\subset\, TP$ on a  $G$-equivariant principal $A$-bundle ${\pi\,\colon\, P \longrightarrow X}$
is a \emph{$G$"-connection} if for all $g\in G$ we have
\[
H^\pi_{gp} \,=\, gH^\pi_p\, .
\]
Equivalently the connection $1$"-form $\omega^\pi$ is $G$"-invariant.
\end{definition}

Then the horizontal lifting map $P\times_X TX \,\longrightarrow\, TP$ is $G$"-equivariant. We write $\overline{v}|_p$ 
for the \emph{horizontal lift} of $v \in TX$ to $p\in P$. From the
definitions it follows that
\begin{align}
\overline{gv}|_{gp}&=g\cdot\overline{v}|_p,
&\overline{v}|_{pa}&=\overline{v}|_p\cdot a
&\forall~ p\in P,\, g\in G,\, a\in A\,.
\end{align}

\begin{definition}
$\mathscr{A}^G(\pi)$ is the \emph{space of $G$"-connections} on $\pi$. When
$G=\{1\}$ we simply write $\mathscr{A}(\pi)$ for the space of connections on $\pi$.
\end{definition}

\begin{remark}
The space $\mathscr{A}^G(\pi)$ may be empty. For example, if $A$ is a Lie subgroup
of a Lie group $G$ and $\pi$ is the principal $A$-bundle $G\longrightarrow G/A$, consider the
left-translation actions of $G$ on $G$ and $G/A$. Then there is no
$G$-connection if the short exact sequence
$$
0 \,\longrightarrow\, \mathfrak{a} \,\longrightarrow\, \operatorname{Lie}(G) \,\longrightarrow\,
\operatorname{Lie}(G)/\mathfrak{a} \,\longrightarrow\, 0
$$
does not admit an $A$-equivariant splitting. For example, when when $G=
\text{SL}(2,{\mathbb C})$ and $A$ is the Borel subgroup of it defined by lower
triangular matrices, then there is no $G$-connection on the principal $A$-bundle
$G\longrightarrow G/A$.
\end{remark}

If $\mathscr{A}^G(\pi)$ is non-empty, then 
it is an affine space modelled on the vector space $\Omega^1_G(P;\, \mathfrak{a})^A=
\Omega^1_G(X;\, \mathrm{ad}(P))$, the space of $G$"-invariant $1$"-forms on $X$ with values in the adjoint bundle.

\section{Distortion of Equivariant Connections}\label{sec:distortion}

In this section, assume that $G$ acts freely and properly on $X$,
so that the quotient map
\[
\sigma\,\colon\, X\,\longrightarrow\,\underline{X}\,:=\, G\backslash X
\]
is a principal $G$"-bundle over the manifold $\underline{X}$.
Let ${\pi\,\colon\, P\,\longrightarrow\, X}$ be a $G$"-equivariant principal $A$"-bundle, and let ${\underline{P}\,=\,G\backslash P}$. Then the quotient map ${\underline{\pi}\,\colon\,
\underline{P}\,\longrightarrow\, \underline{X}}$ is a principal $A$"-bundle and we have a commutative diagram
\begin{equation}\label{quotient-bundle}\begin{aligned}
\xymatrix{
P\ar[r]^{\pi}\ar[d]_{\overline\sigma} &X\ar[d]^{\sigma}\\
\underline{P}\ar[r]_{\underline{\pi}}&\underline{X}.
}
\end{aligned}\end{equation}

\subsection{Quotients of connections}

\begin{proposition}\label{lemma:quotient-connection}
In the situation of \eqref{quotient-bundle}, given a connection\/ $\omega^\sigma$ on the principal\/ $G$-bundle\/ ${\sigma\,\colon\,X\,\longrightarrow\,\underline{X}}$ and a\/ $G$"-connection $\omega^\pi$ on
the principal\/ $A$-bundle\/ $P\,\longrightarrow\, X$, we 
get a \emph{quotient connection}\/ $\omega^{\underline{\pi}}$ on the quotient bundle\/ $\underline{\pi}\,\colon\, 
\underline{P}\,\longrightarrow\,\underline{X}$ by setting
\begin{equation}\label{eqn:quotient-connection}
	H^{\underline{\pi}} \coloneqq \overline\sigma_*\left(
	H^{\pi}\cap \pi_*^{-1}H^{\sigma}
	\right).
\end{equation}
\end{proposition}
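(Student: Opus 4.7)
The plan is to verify in turn that $H^\pi\cap\pi_*^{-1}H^\sigma\subset TP$ is a smooth subbundle of rank $\dim\underline{X}$, that it is both $G$- and $A$-invariant, that $\overline\sigma_*$ is fibrewise injective on it, and finally that the resulting subbundle of $T\underline{P}$ is complementary to the vertical bundle of $\underline{\pi}$ and is $A$-equivariant. Since $H^\pi$ is complementary to $\ker d\pi$, the restriction $\pi_{*,p}\colon H^\pi_p \to T_{\pi(p)}X$ is a linear isomorphism, so intersecting with $\pi_*^{-1}H^\sigma$ just amounts to restricting this isomorphism to the preimage of $H^\sigma$; this immediately gives smoothness and the correct rank.

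For the invariances, $H^\pi$ is $G$-invariant because $\omega^\pi$ is a $G$-connection, and $\pi_*^{-1}H^\sigma$ is $G$-invariant because $\pi$ is $G$-equivariant and $H^\sigma$ descends from $\sigma$ being a connection on the $G$-bundle $X\to\underline{X}$; the analogous $A$-invariances follow from the connection axiom \eqref{connection1form-axioms} and from $\pi\circ R_a = \pi$. Injectivity of $\overline\sigma_{*,p}$ on the intersection is the first place where freeness of the $G$-action is used: if $v\in H^\pi_p\cap\pi_*^{-1}H^\sigma$ lies in $\ker\overline\sigma_{*,p} = p\cdot\mathfrak{g}$ and is written $v=p\cdot\xi$, then $\pi_*(v)=\pi(p)\cdot\xi$ lies simultaneously in $T^{\mathrm{vert}}_{\pi(p)}X$ and in $H^\sigma_{\pi(p)}$, hence vanishes, and freeness gives $\xi=0$. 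Combined with $G$-invariance this shows the pushforward is a well-defined smooth rank-$\dim\underline{X}$ subbundle of $T\underline{P}$.

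The step I expect to require the most care is checking complementarity to $T^{\mathrm{vert}}\underline{P}$. By the rank count it is enough to show the intersection of $\overline\sigma_*(H^\pi\cap\pi_*^{-1}H^\sigma)$ with $T^{\mathrm{vert}}_{\underline{p}}\underline{P}=\underline{p}\cdot\mathfrak{a}$ is zero. Given $v\in H^\pi_p\cap\pi_*^{-1}H^\sigma$ with $\overline\sigma_*(v)=\underline{p}\cdot a$, using $\overline\sigma_*(p\cdot a) = \underline{p}\cdot a$ together with $\ker\overline\sigma_{*,p}=p\cdot\mathfrak{g}$ we can write $v = p\cdot a + p\cdot\xi$ for some $\xi\in\mathfrak{g}$; applying $\pi_*$ gives $\pi_*(v)=\pi(p)\cdot\xi\in H^\sigma_{\pi(p)}\cap T^{\mathrm{vert}}_{\pi(p)}X=0$, so $\xi=0$ by freeness. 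This leaves $v=p\cdot a\in H^\pi_p$, and the connection axiom $\omega^\pi(p\cdot a)=a$ then forces $a=0$, as required.

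It remains to note that the resulting horizontal subbundle is $A$-equivariant, which is immediate from the $A$-invariance of $H^\pi\cap\pi_*^{-1}H^\sigma$ established above together with $A$-equivariance of $\overline\sigma$. Equivalently, one could phrase the argument in terms of connection $1$-forms by first defining $\omega^{\underline{\pi}}$ on lifts through $\overline\sigma_*$ and checking well-definedness via exactly the same diagonal decomposition, but the subbundle description \eqref{eqn:quotient-connection} makes all four verifications above essentially linear-algebraic.
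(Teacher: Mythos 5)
Your proposal is correct and follows essentially the same route as the paper: both show that $\overline\sigma_*\left(H^\pi\cap\pi_*^{-1}H^\sigma\right)$ is a $G$- and $A$-invariant subbundle meeting the vertical bundle of $\underline{\pi}$ trivially (using horizontality for $\sigma$ and then for $\pi$, plus freeness of the $G$-action) and conclude by a rank count. The only difference is cosmetic: you argue element-wise via the decomposition $v = p\cdot a + \xi\cdot p$ and spell out the fibrewise injectivity of $\overline\sigma_*$ on the intersection, whereas the paper runs the same verification as a calculus of images and preimages of subbundles.
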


\begin{proof}
Since $G$ acts freely on ${H^{\pi}\cap \pi_*^{-1}H^{\sigma}\subset TP}$,
it follows that \eqref{eqn:quotient-connection} is a subbundle of $T\underline{P}$. It is easy to check that it is both 
$G$"-equivariant and $A$"-equivariant. Also
\begin{align*}
&{\overline\sigma}_*^{-1}\left(\underline\pi_*^{-1}(0) \cap H^{\underline{\pi}}\right)\\
&=\pi_*^{-1}\sigma_*^{-1}(0)\cap \overline\sigma_*^{-1}\overline\sigma_*(H^{\pi}
\cap \pi_*^{-1}H^{\sigma}) &\text{see~\,\eqref{quotient-bundle}, \eqref{eqn:quotient-connection}}\\
&\subset \pi_*^{-1}\sigma_*^{-1}(0)\cap H^{\pi}\cap \pi_*^{-1}H^{\sigma}\\
&= \pi_*^{-1}(\sigma_*^{-1}(0) \cap H^\sigma) \cap H^\pi\\
&= \pi_*^{-1}(0) \cap H^\pi = 0&\text{(horizontal)}
\end{align*}
This shows that $H^{\underline\pi}$ has trivial intersection with $\underline\pi^{-1}(0)$. The rank is
\[
\rk H^{\underline\pi} = \rk \underline{\pi}_*\overline{\sigma}_*(H^\pi\cap \pi^{-1}_* H^\sigma)
= \rk \sigma_*\pi_*(H^\pi\cap \pi^{-1}_* H^\sigma)
= \rk \sigma_*H^\sigma=\rk T\underline{X},
\]
using the fiberwise injectivity of $\underline{\pi}_*$ on $H^{\underline\pi}$ just shown; similarly for $\pi_*, \sigma_*$. This proves that \eqref{eqn:quotient-connection} is a complementary subbundle of the vertical bundle $\underline\pi^{-1}_*(0)$.
\end{proof}

\begin{remark}
The quotient connection \eqref{eqn:quotient-connection} clearly depends on $\omega^\sigma$,
a point that is sometimes problematic in the literature.
\end{remark}

Using $\overline\sigma\,\colon\, P\,\longrightarrow\, \underline{P}$ and the pullback connections of Proposition~\ref{fact:pullback-connections} we obtain 
from a connection on $\underline\pi$ a $G$"-connection on $\pi$. This determines a map
\begin{equation}\label{eqn:r}
	r\,\colon\, \scrA(\underline\pi)\,\longrightarrow\, \scrA^G(\pi)\, .
\end{equation}
In particular, the set of $G$"-connections on $\pi$ is non-empty. Also, for each fixed choice of connection on $\sigma$, we get from Proposition~\ref{lemma:quotient-connection} a quotient connection map
\begin{equation}\label{eqn:q}
q(\omega^\sigma)\,\colon \,\scrA^G(\pi)\,\longrightarrow\, \scrA(\underline\pi)\, .
\end{equation}

\begin{proposition}\label{section-retraction}
Taking pullback connections \eqref{eqn:r} is a right inverse
to taking quotients connection \eqref{eqn:q}, so
\/
$q(\omega^\sigma)\circ r = {\rm Id}_{\scrA(\underline\pi)}$. In particular,\/
$q(\omega^\sigma)$ is surjective and\/ $r$ is injective.
\end{proposition}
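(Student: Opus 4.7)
The plan is to unfold both operations at the level of horizontal subbundles and check directly that their composition is the identity. Fix $\omega^{\underline\pi}\in \scrA(\underline\pi)$ with horizontal distribution $H^{\underline\pi}$. Proposition~\ref{fact:pullback-connections} applied to the square \eqref{quotient-bundle} shows that $\omega^\pi\coloneqq r(\omega^{\underline\pi})$ has horizontal bundle $H^\pi = \overline\sigma_*^{-1}(H^{\underline\pi})$, and this distribution is manifestly $G$"-invariant. Substituting into \eqref{eqn:quotient-connection}, the horizontal bundle of $q(\omega^\sigma)(\omega^\pi)$ is
\[
\overline\sigma_*\bigl(H^\pi \cap \pi_*^{-1}H^\sigma\bigr),
\]
so the task reduces to identifying this with $H^{\underline\pi}$.

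The inclusion $\overline\sigma_*(H^\pi \cap \pi_*^{-1}H^\sigma)\subseteq H^{\underline\pi}$ is immediate from the definition of $H^\pi$. For the reverse inclusion, given $\underline v\in H^{\underline\pi}_{\underline p}$ I would first pick, by fiberwise surjectivity of $\overline\sigma_*$, any lift $v_0 \in H^\pi_p$ with $\overline\sigma_* v_0 = \underline v$. Since $\pi_* v_0$ may fail to lie in $H^\sigma$, the key idea is to correct $v_0$ using the infinitesimal $G$"-action: set $\eta\coloneqq \omega^\sigma(\pi_* v_0)\in \operatorname{Lie}(G)$ and let $\widetilde\eta_p \in T_pP$ be the value at $p$ of the fundamental vector field of $\eta$ on $P$. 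This vector is tangent to the $G$"-orbit through $p$, so it lies in $\ker \overline\sigma_*$, while $G$"-equivariance of $\pi$ carries it under $\pi_*$ to the fundamental vector field of $\eta$ on $X$ at $\pi(p)$, which has $\omega^\sigma$"-value $\eta$ by the connection axiom \eqref{connection1form-axioms}. Consequently $v\coloneqq v_0 - \widetilde\eta_p$ satisfies $\overline\sigma_* v = \underline v$, so $v\in H^\pi$, while $\omega^\sigma(\pi_* v) = \eta - \eta = 0$, so $\pi_* v\in H^\sigma$. This produces the required element of $H^\pi\cap \pi_*^{-1}H^\sigma$ mapping to $\underline v$.

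The one delicate step is this correction move, which relies on both that the fundamental vector field on $P$ lies in $\ker \overline\sigma_*$ and that $\pi_*$ sends it to the corresponding fundamental vector field on $X$ with the right $\omega^\sigma$"-value. Once $q(\omega^\sigma)\circ r = \mathrm{Id}_{\scrA(\underline\pi)}$ is established, the ``in particular'' clause is formal: the existence of a right inverse makes $q(\omega^\sigma)$ surjective and $r$ injective.
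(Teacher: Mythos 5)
Your proof is correct. Both you and the paper reduce the claim to showing $\overline\sigma_*\bigl(\overline\sigma_*^{-1}H^{\underline\pi}\cap \pi_*^{-1}H^{\sigma}\bigr)=H^{\underline\pi}$, but the execution differs. The paper runs a purely formal chain of image/preimage identities for distributions (using facts like $f(U\cap V)=f(U)\cap f(V)$ when $\ker f\subseteq U$, and horizontality of $H^\sigma$, $H^{\underline\pi}$); as written that chain contains one genuine inclusion, so it really only delivers $\overline\sigma_*(H^\pi\cap\pi_*^{-1}H^\sigma)\subseteq H^{\underline\pi}$, and equality then follows because Proposition~\ref{lemma:quotient-connection} guarantees the left-hand side is itself a connection, hence a complement of the vertical bundle of the same rank. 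You instead prove the two inclusions separately and handle the nontrivial one, $H^{\underline\pi}\subseteq\overline\sigma_*(H^\pi\cap\pi_*^{-1}H^\sigma)$, by an explicit pointwise construction: lift $\underline v$ arbitrarily into $H^\pi=\overline\sigma_*^{-1}H^{\underline\pi}$ and correct by the fundamental vector field of $\eta=\omega^\sigma(\pi_*v_0)$, which lies in $\ker\overline\sigma_*\subseteq H^\pi$ and maps under $\pi_*$ to the fundamental vector field on $X$ with $\omega^\sigma$-value $\eta$. Both steps of your correction move are justified ($\overline\sigma$ is the quotient by $G$, and $\pi$ is $G$"-equivariant), so the argument is complete. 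What your route buys is independence from the rank count and from the prior verification that the quotient of a connection is a connection; what the paper's route buys is brevity and the absence of any choice of lift. Either is acceptable.
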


\begin{proof}
Putting $H^\pi\,=\,\overline{\sigma}_*^{-1}H^{\underline\pi}$ from Proposition~\ref{fact:pullback-connections} into 
\eqref{eqn:quotient-connection} gives:
\begin{align*}
&\overline\sigma_*\left(
	\overline{\sigma}_*^{-1}H^{\underline\pi}\cap \pi_*^{-1}H^{\sigma}
	\right)\\
&=\overline\sigma_*\left( \overline\sigma_*^{-1}H^{\underline\pi}\cap \overline\sigma_*^{-1}\overline\sigma_* \pi_*^{-1}H^{\sigma} \right) & (\overline\sigma_*^{-1}\overline\sigma_*A=A+\ker\overline\sigma_*)\\
&=H^{\underline{\pi}} \cap \overline\sigma_*\pi_*^{-1}H^{\sigma} &(\text{$\overline\sigma_*$ surjective})\\
&\subset H^{\underline{\pi}} \cap \underline{\pi}^{-1}_*\underline{\pi}_*\overline\sigma_*\pi_*^{-1}H^{\sigma}\\
&= H^{\underline{\pi}}\cap \underline{\pi}^{-1}_*\sigma_*\pi_*\pi_*^{-1}H^{\sigma}
&\text{see~\,\eqref{quotient-bundle}}\\
&= H^{\underline{\pi}}\cap \underline{\pi}^{-1}_*\sigma_*H^{\sigma} &(\text{$\pi_*$ surjective})\\
&= H^{\underline{\pi}}\cap \underline{\pi}^{-1}_*T\underline{X} &(\text{horizontal})\\
&= H^{\underline{\pi}}. & (\text{$\underline\pi_*$ surjective})
\end{align*}
This implies $q(\omega^\sigma)\circ r = {\rm Id}_{\scrA(\underline\pi)}$.
\end{proof}

\subsection{Distortion}

We denote the Lie algebra of $G$ by $\mathfrak{g}$.

Let $\omega^{\pi}$ be a 
$G$"-connection on $\pi$. If we restrict $P$ to a $G$"-orbit of $X$ we have a 
canonical connection given by the $G$"-action on $P$, because the action of $G$
on $X$ is free. This gives us a canonical family of 
connections parametrized by the orbit space $\underline{X}$,
and so determines a gauge potential with respect to $\omega^{\pi}$,
which we call the distortion of the $G$"-connection.

\begin{definition}\label{d1}
Let $G$ act freely and properly on $X$ and let
${\pi\,\colon\, P\,\longrightarrow\, X}$ be a $G$"-equivariant
principal $A$"-bundle.
The \emph{$G$"-distortion} of a $G$"-connection $\omega^{\pi}$ on $P$ is the following $G$"-equivariant vector bundle homomorphism over $X$:
\begin{align}\label{eqn:distortion}
	\tau(\omega^{\pi})\colon \mathfrak{g}\times X &\longrightarrow\, \ad(P),
	&\tau(\xi,x)&\coloneqq \overline{\xi\cdot x}|_p - \xi\cdot p
	&\forall~ x\in X,\, p\in \pi^{-1}(x)
\end{align}
Here the trivial bundle $\mathfrak{g}\times X$ gets the $G$"-action $g(\xi,x)=(g\xi g^{-1},gx)$ and the adjoint bundle $\ad(P)=(P\times\mathfrak{a})/A$ is equipped with the
$G$"-action $g(p,\eta)\coloneqq (gp,\eta)$.
\end{definition}

To explain the distortion is well-defined, note that
the right hand side of \eqref{eqn:distortion} is vertical. If we replace $p$ by $pa$ we get
\begin{align*}
\overline{\xi\cdot x}|_{pa} - \xi\cdot p \cdot a = \left(\overline{\xi\cdot x}|_{p} - \xi\cdot p\right) \cdot a,
\end{align*}
which represents the same vector in $\ad(P)=T^\mathrm{vert}P/A$. The
$G$"-equivariance property is verified by the calculation
$\overline{g\xi g^{-1}\cdot gx}|_{gp} - g\xi g^{-1} gp = g\overline{\xi x}|_p -
g\xi\cdot p.$

\begin{remark}
Alternatively, the $G$"-distortion can be identified with the composition
\[
	\mathfrak{g} \times P \,\longrightarrow\, TP \,\stackrel{-\omega^\pi}{\longrightarrow}\, 
\mathfrak{a}\, ,
\]
where the first map is the infinitesimal action. Thus $-\omega^\pi(\xi\cdot p)= \tau(\omega^\pi)(\xi,x)|_p$. For $A$ abelian, $\ad(P)\,=\,X\times\mathfrak{a}$, so the distortion may then be regarded  as a homomorphism $ \mathfrak{g}\times_G X\,\longrightarrow\, \mathfrak{a}$.
\end{remark}

The distortion measures how far a $G$"-equivariant connection is
from being a pullback:

\begin{proposition}\label{quotient-exact}
Let\/ $G$ act freely and properly on\/ $X$ and
let\/ $\pi\,\colon\, P\,\longrightarrow\, X$ be a\/ $G$"-equivariant principal\/ $A$"-bundle. Then the pullback map \eqref{eqn:r} is part
of an exact sequence
\begin{equation}\label{first-exact}
0 \longrightarrow \scrA(\underline{\pi}) \xrightarrow{\enskip r\enskip} \scrA^G(\pi) \xrightarrow{\enskip\tau\enskip} \Hom_G\big(T^\mathrm{vert}X, \ad(P)\big) \longrightarrow 0.
\end{equation}
A connection\/ $\omega^\sigma$ on\/ ${\sigma\,\colon\, X\,\longrightarrow\,\underline{X}}$ together with a base-point in\/ $\scrA(\underline\pi)$
determines a split of \eqref{first-exact}, in which case we have a dual split short exact sequence
\begin{equation}\label{sec-exact}
0 \longrightarrow \Hom_G(T^\mathrm{vert}X, \ad(P)) \longrightarrow \scrA^G(\pi) \xrightarrow{\;q(\omega^\sigma)\;} \scrA(\underline{\pi}) \longrightarrow 0\, ,
\end{equation}
using the quotient connection map \eqref{eqn:q} associated to\/ $\omega^\sigma$.
\end{proposition}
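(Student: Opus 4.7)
The plan is to establish exactness of \eqref{first-exact} at each term, construct an explicit splitting depending on the auxiliary data $(\omega^\sigma, \omega_0)$, and then read off \eqref{sec-exact} from the splitting.

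First I would verify that $\tau \circ r = 0$ by using the alternative description $\tau(\omega^\pi)(\xi,x)|_p = -\omega^\pi(\xi\cdot p)$ from the Remark: a pullback connection $\omega^\pi = \overline\sigma^*\omega^{\underline\pi}$ satisfies $\omega^\pi(\xi\cdot p) = \omega^{\underline\pi}(\overline\sigma_*(\xi\cdot p)) = 0$ because $\overline\sigma_*$ annihilates the $G$-orbit directions. Conversely, if $\tau(\omega^\pi)=0$, then $\ker\overline\sigma_*\subseteq H^\pi$; together with the $G$-invariance of $H^\pi$ and a short transversality check (any $v\in H^\pi$ whose image $\overline\sigma_* v$ is $A$-vertical in $\underline P$ must itself be zero after subtracting an element of $\ker\overline\sigma_*\subseteq H^\pi$), the pushforward $\overline\sigma_* H^\pi$ is an $A$-equivariant complement of the $A$-vertical in $T\underline P$, defining a connection on $\underline\pi$ whose pullback recovers $\omega^\pi$. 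Injectivity of $r$ is already established in Proposition~\ref{section-retraction}.

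For surjectivity of $\tau$, I would produce an explicit section using the auxiliary data. Fix $\omega^\sigma$ and a base-point $\omega_0\in\scrA(\underline\pi)$, and set $\omega^\pi_0 \coloneqq r(\omega_0)$. Given $\phi\in\Hom_G(T^\mathrm{vert}X,\ad(P))$, use $\omega^\sigma$ to split $TX = T^\mathrm{vert}X\oplus H^\sigma$ and define $\alpha\in\Omega^1_G(X,\ad(P))$ to equal $-\phi$ on the $G$-vertical (identified with $\mathfrak g\times X$ via the infinitesimal action) and zero on $H^\sigma$; $G$-equivariance of $\alpha$ follows from that of the projection $v\mapsto \omega^\sigma(v)\cdot x$ and of $\phi$. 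Lifting to the corresponding horizontal, $A$-equivariant, $G$-invariant form $\tilde\alpha\in\Omega^1(P,\mathfrak a)$, set $s(\phi)\coloneqq \omega^\pi_0+\tilde\alpha$. A one-line calculation
\[
\tau(s(\phi))(\xi,x)\big|_p \;=\; -\omega^\pi_0(\xi\cdot p) - \tilde\alpha(\xi\cdot p) \;=\; 0 - \alpha(\xi\cdot x) \;=\; \phi(\xi,x)
\]
shows $\tau\circ s = \id$, proving surjectivity and giving the splitting of \eqref{first-exact}.

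Finally, for the dual sequence \eqref{sec-exact}, Proposition~\ref{section-retraction} gives $q(\omega^\sigma)\circ r = \id$, so it only remains to check that $q(\omega^\sigma)\circ s$ is the constant map at $\omega_0$; equivalently, that replacing $\omega^\pi_0$ by $\omega^\pi_0+\tilde\alpha$ leaves the quotient connection unchanged. This follows because $\alpha$ vanishes on $H^\sigma$, so $\tilde\alpha$ vanishes on $\pi_*^{-1}H^\sigma$, hence the horizontal subspaces of $\omega^\pi_0$ and of $\omega^\pi_0+\tilde\alpha$ have the same intersection with $\pi_*^{-1}H^\sigma$, and by \eqref{eqn:quotient-connection} the two quotient connections coincide. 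Under the splitting $(r,s)$, which identifies $\scrA^G(\pi)$ with $\scrA(\underline\pi)\times\Hom_G(T^\mathrm{vert}X,\ad(P))$, the map $q(\omega^\sigma)$ becomes projection onto the first factor, immediately yielding \eqref{sec-exact}. The main obstacle I foresee is the construction of $\tilde\alpha$ in the third paragraph: carefully passing between $\alpha\in\Omega^1_G(X,\ad(P))$ and its lift on $P$ and verifying all equivariance conditions is mechanical but requires unpacking the correspondence between forms on the base and horizontal equivariant forms on the total space.
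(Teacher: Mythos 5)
Your proof is correct and follows essentially the same route as the paper: the central step in both is the section of $\tau$ obtained by extending a homomorphism on $T^{\mathrm{vert}}X$ by zero on $H^\sigma$ (using the chosen connection $\omega^\sigma$) and adding it to the pullback base-point $r(\omega_0)$. You additionally spell out exactness at the middle term (via $\ker\overline\sigma_*\subseteq H^\pi$ and the pushforward of the horizontal distribution) and the verification of \eqref{sec-exact}, both of which the paper's proof leaves implicit.
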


Recall here that a sequence of affine spaces
\[
0\longrightarrow U\xrightarrow{\enskip f\enskip} V \xrightarrow{\enskip g\enskip} W \longrightarrow 0
\]
is short exact if $g\circ f$ is a constant map $w_0$ and if for any
base-point $u_0\in U$ and $v_0=f(u_0)$ the corresponding sequence of vector spaces is 
exact (the image of $g\circ f$ is taken as the base-point in $W$). This is equivalent to $g\circ f=\operatorname{const}_{w_0}$, $f$ injective, 
$g$ surjective, and $f(U)=g^{-1}(w_0)$. In particular, the definition is independent 
of the choice of $u_0$.

\begin{proof}[{Proof of Proposition \textup{\ref{quotient-exact}}}]
Clearly $\tau\circ r=0$ and $r$ is injective by Proposition~\ref{section-retraction}. 
It is enough to construct an affine linear map
\[
r'\,\colon\, \Hom_G(X\times\mathfrak{g}, \ad(P))\,\longrightarrow\, \scrA^G(\pi)
\]
satisfying $\tau\circ r'=1$. For this we pick a connection on $X\,\longrightarrow\, \underline{X}$ and fix an element 
$\omega^{\underline\pi}\in \scrA(\underline\pi)$. Then $\scrA^G(\pi)=\Omega^1_G(X;\ad(P))+\omega^\pi$ for $\omega^\pi \coloneqq r(\omega^{\underline\pi})$. Using the connection on $X$ we may extend maps on $T^\mathrm{vert}X$ by zero on
$H^{\sigma}$:
\[
r'\colon \Hom_G(X\times\mathfrak{g}, \ad(P)) \xrightarrow{\text{extend}} \Hom_G(TX, \ad(P))\xrightarrow{+\omega^\pi} \scrA^G(\pi)\, .
\]
Using $\tau(\omega^\pi)=\tau(r(\omega^{\underline\pi}))=0$ we get $\tau\circ r'=1$.
\end{proof}

\section{Central Extensions}

\subsection{Connections on Reductions}

Consider a Lie group central extension
\begin{equation}\label{eqn:central-ext}
1\,\longrightarrow\, A \,\stackrel{\alpha}{\longrightarrow}\, B \,\stackrel{\beta}{\longrightarrow}\,
C \,\longrightarrow\, 1.
\end{equation}
The Lie algebras of $B$ and $C$ will be denoted by $\mathfrak b$ and $\mathfrak c$ 
respectively. Let $G$ be a Lie group and $\gamma\,\colon\, G\,\longrightarrow\, B$ a group homomorphism (mostly 
the trivial homomorphism). By composing with the adjoint representation we get 
an induced $G$"-action on $\mathfrak b$. Similarly, using $\beta\circ \gamma$
we get an action of $G$ on $\mathfrak c$ for which the homomorphism
$\beta_*\,\colon\, \mathfrak{b}\,\longrightarrow\, \mathfrak{c}$ becomes
$G$"-equivariant.

\begin{proposition}\label{ExtKES}
Let\/ $G$ be a Lie group acting on a manifold\/ $P$,\/ ${\gamma\,\colon\, G \,\longrightarrow\, B}$ a homomorphism,
and\/ ${\sigma\, \colon\, R \, \longrightarrow \, P}$ a\/ $G$"-equivariant\/
$A$"-bundle. Define a $G$"-action on\/ $R\times^A B$ by\/
$g[r,b] \coloneqq [gr,\gamma(g)b]$ for\/ $[r,b]\in R\times^A B$ and\/ $g \in G$. Then we have an exact sequence
\begin{equation}\label{EQI-exact}
	0\longrightarrow \scrA^G(\sigma) \longrightarrow \scrA^G(R\times^A B) \xrightarrow{\enskip q\enskip}
\Omega^1_{G}(P;\mathfrak{c}),
\end{equation}
using\/ $G$-equivariant\/ $1$"-forms\/ $TP\,\longrightarrow\, \mathfrak{c}$
and where the map\/ $q$ is constructed in \eqref{def-q} below. 
If\/ $\beta_*$ admits a\/ $G$"-equivariant section\/ $\mathfrak{c}\,\longrightarrow\,\mathfrak{b}$, then\/ $q$ is surjective and a choice
of section canonically determines a split of the sequence \eqref{EQI-exact}.
\end{proposition}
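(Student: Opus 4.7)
The plan is to realize both maps concretely via the canonical embedding $\iota\colon R\hookrightarrow R\times^A B$, $r\mapsto [r,1]$, and then to reduce exactness to the short exact sequence of $G$"-modules $0\to\mathfrak{a}\to\mathfrak{b}\to\mathfrak{c}\to 0$ combined with the affine-space framework used in Proposition~\ref{quotient-exact}.

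First I would define $q$ as follows: for $\omega\in\scrA^G(R\times^A B)$, form $\iota^*\omega\in\Omega^1(R;\mathfrak{b})$ and post-compose with $\beta_*$. By the axioms \eqref{connection1form-axioms} this $\mathfrak{c}$-valued form vanishes on $A$-vertical vectors (since $\beta_*\alpha_*=0$) and is $A$-invariant under the right action (because $A$ is central in $B$, so $\Ad(\alpha(a))$ acts trivially on $\mathfrak{b}$), so it descends uniquely to $q(\omega)\in\Omega^1(P;\mathfrak{c})$. The key computation is $G$-invariance: since $g\cdot[r,1]=[gr,1]\cdot\gamma(g)$, the $G$-invariance of $\omega$ together with its right $B$-equivariance yield $(\iota^*\omega)(g_*v)=\Ad(\gamma(g))(\iota^*\omega)(v)$, and applying $\beta_*$ converts this twist into exactly the stated $G$-action on $\mathfrak{c}$ via $\beta\circ\gamma$. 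The first arrow in \eqref{EQI-exact} is extension of scalars: any $\omega^\sigma\in\scrA^G(\sigma)$ lifts uniquely to $\omega\in\scrA^G(R\times^A B)$ characterized by $\iota^*\omega=\alpha_*\omega^\sigma$ together with $B$-equivariance, and $G$-invariance of the extension is immediate from the definition $g[r,b]=[gr,\gamma(g)b]$. Injectivity is clear and $q\circ(\mathrm{ext})=0$ follows from $\beta_*\alpha_*=0$. For exactness in the middle, if $q(\omega)=0$ then $\beta_*\iota^*\omega$ descends to $0$ on $P$ and hence already vanishes on $TR$; thus $\iota^*\omega$ takes values in $\alpha_*\mathfrak{a}$ and defines a $G$-invariant $A$-connection on $\sigma$ whose extension is $\omega$.

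For the splitting, centrality of $A$ in $B$ gives $\Ad(\alpha(a))=\mathrm{id}_{\mathfrak{b}}$, so (when nonempty) $\scrA^G(R\times^A B)$ is an affine space over $\Omega^1_G(P;\mathfrak{b})$, and under this identification $q$ becomes post-composition with $\beta_*$. A $G$-equivariant section $s\colon\mathfrak{c}\to\mathfrak{b}$ then supplies a linear right inverse $\phi\mapsto s\circ\phi$ to $\beta_*$ on $1$-forms, which combined with any chosen base $G$-connection yields the desired affine split of \eqref{EQI-exact}. The principal obstacle throughout is the $G$-equivariance bookkeeping, since $\iota$ is not $G$-equivariant on the nose but only up to right translation by $\gamma(g)$; once this twist is consistently absorbed into the $G$-actions on $\mathfrak{b}$ and $\mathfrak{c}$, the remaining steps reduce to routine diagram chases and extension-of-scalars manipulations.
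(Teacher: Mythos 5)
Your proposal is correct and follows essentially the same route as the paper: the paper's form $\omega_0$ in \eqref{omegaomega0}--\eqref{formsScalarExt} is exactly your $\iota^*\omega$, the map $q$ is likewise defined as reduction modulo $\mathfrak{a}$ (equivalently post-composition with $\beta_*$), the kernel is identified with the $\mathfrak{a}$-valued forms $\scrA^G(\sigma)$, and the splitting is obtained from $s$ together with a base point in $\scrA^G(\sigma)$, just as you describe. The $\gamma(g)$-twist you track in the $G$-equivariance bookkeeping is the same phenomenon the paper records as ``$\omega$ is $G$-invariant precisely when $\omega_0$ is $G$-equivariant.''
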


In particular, the sequence always splits when $G$ is compact (for example $G={1}$), or when $G=B$, $\gamma=\id_B$ and $B$ is a reductive Lie group, by which we mean that every real $B$-representation is completely reducible.

\begin{proof}[Proof of Proposition \textup{\ref{ExtKES}}]
On the level of tangent bundles, the principal $A$"-bundle ${\hat\sigma \colon R\times B\longrightarrow R\times^A B}$ induces an exact sequence
\[
	0 \longrightarrow \ad(\hat\sigma) \longrightarrow \frac{TR\times TB}{A} \longrightarrow T(R\times^A B) \longrightarrow 0,
\]
we see that connection $1$"-forms $\omega$ on $R\times^A B$ correspond via
\begin{equation}\label{omegaomega0}
	\omega(v+\xi) = b^{-1}\omega_0(v)b + b^{-1}\xi\qquad
	\forall~ v\in T_rR,\, \xi \in T_bB
\end{equation}
to $1$-forms $\omega_0 \in \Omega^1(R;\mathfrak{b})$ satisfying
\begin{align}\label{formsScalarExt}
\omega_0(r\eta)&=\eta,
&\omega_0(va) &= a^{-1}\omega_0(v)a
&\forall ~ r\in R,\, v\in T_rR,\, a\in A,\, \eta\in \mathfrak{a}.
\end{align}
Moreover $\omega$ is $G$"-invariant precisely when $\omega_0\,\colon\, TR \,\longrightarrow\, \mathfrak{b}$ is 
$G$"-equivariant. 
Hence $\scrA^G(\sigma)\subset \scrA^G(R\times^A B)$ is the subspace of $\mathfrak{a}$"-valued forms. We may define
\begin{equation}\label{def-q}
	\sigma^*q(\omega) \coloneqq \omega_0 \mod \mathfrak{a},
\end{equation}
since by \eqref{formsScalarExt} the form $\omega_0$ is horizontal 
and $A$"-invariant, and $\scrA^G(\sigma)$ is the kernel of $q$.

Finally, suppose we have a $G$"-equivariant section of $\beta_*$. Then we find a $G$"-equivariant bundle homomorphism ${R\times^A \mathfrak{c}
\,\longrightarrow\, R\times^A \mathfrak{b}}$ that is a section of ${R\times^A \beta_*}$. Let $\omega^\sigma \in 
\scrA^G(\sigma)\subset 
\scrA^G(R\times^AB)$. Then we may identify $\scrA^G(R\times^AB)$ with $\Omega^1_G(P;\mathfrak{b})$ and a section of $q$ is provided by
\[
	\Hom_G(TP,\mathfrak{c}) \xrightarrow{\enskip s_*\;} \Hom_G(TP, \mathfrak{b}) \xrightarrow{\;+\omega^\sigma\,} \scrA^G(R\times^AB).
\]
This also shows surjectivity of $q$.
\end{proof}

\subsection{Central lifting problems}

Throughout, let $G$ be a Lie group acting on a manifold $X$ and
fix a central extension of Lie groups as in \eqref{eqn:central-ext}.
Given a $C$"-bundle $P$, we are interested in the lifting problem of
whether the structure group may be reduced along $\beta$ as follows:

\begin{definition}\label{central-extension}
Let $\pi \colon P \longrightarrow X$ be a $G$"-equivariant principal $C$"-bundle.
An \emph{equivariant central extension} of $\pi \colon P \longrightarrow X$ is a pair
$(\rho,\sigma)$ of a $G$"-equivariant principal $B$"-bundle
\[
	\rho \colon R \longrightarrow X
\]
and a $G$"-equivariant $B$"-homomorphism
\[
	\sigma\colon R \longrightarrow P.
\]
(here we use $\beta$ to convert the $C$"-action on $P$ into a $B$"-action.)
\end{definition}

In particular, the principal $C$"-bundles $R/A$ and $P$ are isomorphic via $\sigma$.
The data in Definition~\ref{central-extension} will be
conveniently denoted by ${\rho\,\colon\, R\xrightarrow{\enskip\sigma\enskip} P\xrightarrow{\enskip\pi\enskip} X}.$

\begin{remark}
This terminology also makes sense in the topological category for a central extension of topological groups \eqref{eqn:central-ext}
in which $\beta$ is an $A$-bundle (automatic when $B,C$ are Lie groups).
When $C$ is a Lie group, there is no difference between the topological and the
smooth category; this follows from the two facts that $B$ inherits a unique smooth
structure and every principal bundle for a Lie group has a unique smooth
structure (see M\"uller--Wockel \cite{MR2574141}).
Similarly, in the holomorphic category we can suppose \eqref{eqn:central-ext} is a central extension of complex Lie groups, meaning that $\beta$ is a holomorphic $A$"-bundle.
\end{remark}

\begin{remark}
There is also the weaker equivariance assumption that for each $g\in G$ we have $g^*P \cong P$. It may be of interest to study the relationship between these two versions of the problem, both in the smooth and the holomorphic category.
\end{remark}

We are interested in putting connections on central extensions. In principle,
connections on $R$ and on $P$ are independent, so we impose the following
compatibility condition for a connection on $R$ to qualify as a central extension
of a given connection on $P$:

\begin{definition}
Let $\rho\colon R\xrightarrow{\sigma} P\xrightarrow{\pi} X$ be a central extension as in Definition~\ref{central-extension}, so that we are given a $G$"-equivariant principal $B$"-bundle $\rho \colon R \longrightarrow X$ and a $G$"-equivariant $B$"-homomorphism $\sigma\colon R \longrightarrow P$.
Suppose ${\pi\colon P \longrightarrow X}$ is equipped with a connection $\omega^\pi$. Then a connection $\omega^\rho$ on $\rho$
is \emph{admissible} if
\begin{equation}\label{compatible-connection}
	\beta_*(\omega^\rho) = \sigma^* \omega^\pi.
\end{equation}
\end{definition}

\subsection{Special case of free actions}\label{ssec:free-actions}

In this section we briefly discuss the simple case when the
action of $G$ on $X$ is free. Then one may understand central
extensions from the point of view of sheaf cohomology. The
freeness implies that the
equivariant central extensions correspond bijectively to central extensions of the
quotient principal $C$"-bundle ${G\backslash P \longrightarrow G\backslash X}$. The pullback of the quotient bundle 
along the 
projection ${X \longrightarrow G\backslash X}$ is isomorphic to $P$, and similarly for $R$. The central extension 
induces an exact sequence
\begin{equation}\label{es1}
	\cdots \longrightarrow H^1(G\backslash X,\, B) \longrightarrow H^1(G\backslash X,\,C) \xrightarrow{\enskip\delta\enskip} H^2(G\backslash X,\, A).
\end{equation}

Of course, isomorphism classes of $C$-bundles on $G\backslash X$ correspond bijectively
to classes in $H^1(G\backslash X, \,C)$. We have thus shown:

\begin{proposition}\label{prop:free}
Let\/ $G$ act freely on\/ $X$, and let\/ ${P \,\longrightarrow\, X}$ be a $G$"-equivariant
$C$"-bundle.
There exists an equivariant central extension of\/ $P$ if and only if\/
${\delta[G\backslash P]=0}$, where\/ $\delta$ is the coboundary homomorphism in
\eqref{es1}.

If\/ $A,B,C$ are abelian then the space of equivariant central extensions
up to an isomorphism, if non-empty, is an affine space modelled on the quotient group\/
${H^1(G\backslash X, A)\big/H^0(G\backslash X,C)}$.
\end{proposition}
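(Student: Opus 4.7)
The plan is to reduce both assertions to the standard long exact sequence in (non-abelian) sheaf cohomology on the quotient manifold $G\backslash X$. The text preceding the proposition has already carried out the key identification, so the work is mostly in spelling out how the claims extract from the exact sequence \eqref{es1} and its continuation to the left.

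First I would justify the bijection
\[
\{\text{equivariant central extensions of }P\}/\!\sim \;\longleftrightarrow\; \{\text{central extensions of }G\backslash P\}/\!\sim,
\]
using that a free (and proper) $G$-action on $X$ turns pullback along $X\longrightarrow G\backslash X$ into an equivalence between $G$-equivariant $H$-bundles on $X$ and $H$-bundles on $G\backslash X$, for any Lie group $H\in\{A,B,C\}$; compatibility with the homomorphism $\beta$ and with the structure maps $\sigma$ makes this equivalence respect central extensions. Under this correspondence an equivariant central extension of $P$ becomes a lift of $[G\backslash P]\in H^1(G\backslash X, C)$ to $H^1(G\backslash X, B)$, so by exactness of \eqref{es1} such a lift exists iff $\delta[G\backslash P]=0$, which proves the first statement.

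For the second statement I would continue the long exact sequence \eqref{es1} further to the left using that $A,B,C$ are all abelian, so the associated sheaves of germs of smooth functions are sheaves of abelian groups and one has the full long exact sequence
\begin{equation*}
H^0(G\backslash X,B)\longrightarrow H^0(G\backslash X,C)\xrightarrow{\;\delta_0\;} H^1(G\backslash X,A)\longrightarrow H^1(G\backslash X,B)\longrightarrow H^1(G\backslash X,C).
\end{equation*}
Assuming the fiber over $[G\backslash P]$ is non-empty, the group $H^1(G\backslash X,A)$ acts transitively on it by twisting (tensoring an $A$-bundle onto a lift), and by exactness the kernel of $H^1(G\backslash X,A)\to H^1(G\backslash X,B)$ is precisely $\delta_0(H^0(G\backslash X,C))$, so the action descends to a simply transitive action of the quotient group $H^1(G\backslash X,A)/H^0(G\backslash X,C)$ (where the latter is shorthand for the image under $\delta_0$).

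The only mild obstacle is a bookkeeping one: checking that the geometric $H^1(G\backslash X, A)$-torsor structure on the set of lifts really coincides with the algebraic torsor structure coming from the coboundary in the long exact sequence. This is a standard \v{C}ech-cocycle verification—choose a common trivializing cover, compare the transition cocycles of two lifts, and observe that their quotient is an $A$-valued $1$-cocycle whose class is independent of the chosen trivializations—so no new ideas are required beyond Grothendieck's classical treatment cited in the introduction.
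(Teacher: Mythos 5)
Your proposal follows exactly the route the paper takes: reduce equivariant central extensions to central extensions of the quotient bundle $G\backslash P \to G\backslash X$ via the freeness of the action, read off the existence criterion from exactness of \eqref{es1}, and (in the abelian case) obtain the torsor structure from the continuation of the long exact sequence, with $H^0(G\backslash X,C)$ acting through the degree-zero coboundary. The paper only sketches this in the surrounding text, and your write-up supplies the same argument with the bookkeeping (the identification of the geometric and cohomological torsor structures) made explicit; it is correct.
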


When $B$ is not abelian, the problem of determining which obstruction classes can occur was raised in \cite[Problem~VI]{MR2208114}.

\begin{remark}
 One may follow a similar approach in the equivariant case using Grothendieck equivariant sheaves.
 The corresponding sheaf cohomology $H^1_G(X, \,C)$ is related through acyclic $G$-covers of $X$ to $C$-bundles (see 
\cite[p.~211]{grothendieck1957quelques}).
 Here a $G$-cover $\{U_i\}_{i\in I}$ is an open cover with a fixed-point free $G$-action on $I$ so
 that $gU_i = U_{gi}$. It is acyclic when $H^k(U_{i_1\cdots i_n}, \,C)\,=\,0$, $k>0$, for ordinary sheaf cohomology.
\end{remark}

\subsection{Topological classification}

We first explain how to classify central extensions of
Definition~\ref{central-extension} in the smooth category, without connections.
Continue to assume that $G$ is a Lie group acting on $X$ and
fix a central extension of Lie groups as in \eqref{eqn:central-ext}.

\begin{definition}\label{dfn:difference-bundle}
Let $\pi\colon P \longrightarrow X$ be a $G$"-equivariant $C$"-bundle. The
\emph{difference map} of $P$ is
\[
	\delta_P\,\colon\, P \times_X P \,\longrightarrow\, C,\quad
	z_1\delta_P(z_1,z_2) := z_2.
\]
The map $\delta_P$ is $G$"-invariant for the action of $G$ by
${g(z_1,z_2)=(gz_1,gz_2)}$ on ${P\times_X P}$.
Pullback of $\beta$ along $\delta_p$
defines a $G$-equivariant $A$"-bundle $Q_P$, called the \emph{difference bundle} of $P$:
\begin{equation}
\label{eqn:diff-bundle}
\begin{aligned}\xymatrix{
Q_P\ar[r]\ar[d]&B\ar[d]^\beta\\
P\times_XP \ar[r]_-{\delta_P} & C
}\end{aligned}
\end{equation}
The product in $B$ determines the \emph{multiplication map}
\begin{equation}\label{eqn:multiplication}
	m\colon Q_P \times_P Q_P \longrightarrow Q_P,
	\quad
	m\big((p_1,p_2,b),(p_2,p_3, b')\big) = (p_1,p_3,bb').
\end{equation}
\end{definition}

The data of Definition~\ref{dfn:difference-bundle} is a $G$"-equivariant version of the lifting $A$"-gerbe in~\cite{MR2681698}, see also Proposition~\ref{obsExt}, which is trivial (in the sense of the following proposition)
if and only if a solution to the central lifting problem of Definition~\ref{central-extension} may be found. This result is well known, but we
wish to fix notation and give proof that is particularly convenient
for studying connections.

\begin{proposition}\label{propo0}
Let\/ ${\pi\colon P \longrightarrow X}$ be a\/ $G$"-equivariant\/ $C$"-bundle.
Let\/ $Q_P$ be the difference bundle of\/ $P$ with its multiplication map\/ $m$.
Up to an isomorphism,\/ $G$"-equivariant central extensions\/
${\rho\colon R\xrightarrow{\sigma} P\xrightarrow{\pi} X}$
of\/ $\pi$ correspond to pairs\/
 $(\sigma, \psi)$ consisting of a\/ $G$"-equivariant\/ $A$"-bundle\/ $\sigma\colon R\longrightarrow P$ and a\/ $G$"-equivariant\/ $A$"-homomorphism
\begin{equation}\label{psi}
	\psi\colon R\times^A R \longrightarrow Q_P
\end{equation}
over\/ $P\times_X P$ satisfying the cocycle identity
\begin{equation}\label{cocycle}
	m \big(\psi[r_1,r_2], \psi[r_2,r_3] \big) = \psi[r_1,r_3] \qquad \forall(r_1,r_2,r_3)
\in R\times_X R \times_X R.
\end{equation}
\end{proposition}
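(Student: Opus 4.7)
The plan is to set up mutually inverse assignments between isomorphism classes of $G$-equivariant central extensions $\rho\colon R\xrightarrow{\sigma}P\xrightarrow{\pi}X$ and isomorphism classes of pairs $(\sigma,\psi)$ satisfying the cocycle identity \eqref{cocycle}. \emph{Forward direction.} Starting from $(R,\sigma)$, I define $\psi[r_1,r_2]\coloneqq(\sigma(r_1),\sigma(r_2),b)$, where $b\in B$ is the unique element with $r_2=r_1b$; such a $b$ exists because $\rho(r_1)=\pi\sigma(r_1)=\pi\sigma(r_2)=\rho(r_2)$ forces $r_1,r_2$ to lie in a common $B$-orbit. One then checks that $\beta(b)=\delta_P(\sigma(r_1),\sigma(r_2))$, placing the triple in $Q_P$; that the assignment is invariant under the diagonal $A$-action on $R\times R$ by centrality of $A$ in $B$; that $A$- and $G$-equivariance over $P\times_XP$ follow because both actions commute with the right $B$-action and $\sigma$ is $G$-equivariant; and that \eqref{cocycle} is immediate from $r_3=r_1(b_{12}b_{23})$ when $r_2=r_1b_{12}$ and $r_3=r_2b_{23}$.

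\emph{Reverse direction.} Starting from $(\sigma,\psi)$, the task is to promote the given $A$-action on $R$ to a principal right $B$-action. The fiber of $R\times^AR\to P\times_XP$ over $(z_1,z_2)$ is an $A$-torsor via $a\cdot[r_1,r_2]=[r_1,r_2a]$, and so is the fiber of $Q_P$ via right multiplication in $B$ restricted to $A$; because $\psi$ is $A$-equivariant and covers the identity, its restriction to each fiber is an isomorphism of $A$-torsors. Consequently, for $r\in R$ and $b\in B$ there is a unique $r'\in\sigma^{-1}(\sigma(r)\beta(b))$ with $\psi[r,r']=(\sigma(r),\sigma(r)\beta(b),b)$, and I would set $r\cdot b\coloneqq r'$. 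The unit axiom follows by applying the cocycle identity with $r_1=r_2=r_3=r$, which forces $\psi[r,r]$ to be an idempotent in the fiber of $Q_P$ above $(\sigma(r),\sigma(r))$ and hence equal to $(\sigma(r),\sigma(r),1_B)$. Associativity of the $B$-action follows by applying the cocycle identity to the triple $(r,r\cdot b_1,(r\cdot b_1)\cdot b_2)$. Compatibility with the original $A$-action uses $A$-equivariance of $\psi$ together with $\psi[r,r]=1_B$ to give $\psi[r,ra]=(\sigma(r),\sigma(r),\alpha(a))$. The identity $\sigma(r\cdot b)=\sigma(r)\beta(b)$ is built into the definition, $G$-equivariance of the action stems from $G$-equivariance of $\psi$, and smoothness, freeness, and properness reduce to the corresponding properties of the $A$-action together with the smoothness of $\psi$ and of the quotient $R\times R\to R\times^AR$.

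It is then straightforward to verify that the two assignments are mutually inverse and descend to isomorphism classes, since each construction determines the other on the nose. The main obstacle, as I see it, is the reverse direction: ensuring that the point-by-point recipe for $r\cdot b$ genuinely assembles into a smooth principal $B$-bundle structure on $R$, rather than a merely set-theoretic rule. The cocycle identity \eqref{cocycle} is precisely the ingredient that secures associativity, and without it one would only obtain a twisted operation. All remaining verifications reduce to systematic use of the $A$-torsor structure on $R\times^AR\to P\times_XP$ together with the definition of $Q_P$ in \eqref{eqn:diff-bundle}.
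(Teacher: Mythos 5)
Your proof is correct. The forward direction is exactly the paper's: $\psi(r_1,r_2)$ is the $B$-difference map, which descends to $R\times^A R$ because $A$ is central, and the cocycle identity is immediate. For the converse the underlying mechanism is also the same --- invert the fiberwise $A$-torsor isomorphism $\psi$ to find, for given $r_1$ and $b$, the unique $r_2$ with $\psi[r_1,r_2]=(\sigma(r_1),\sigma(r_1)\beta(b),b)$ --- but you package it differently. You define the right $B$-action on $R$ directly by $r\cdot b\coloneqq r_2$ and verify unit, associativity, and compatibility with the $A$-action from the cocycle identity and $A$-equivariance; the paper instead uses the same datum to define a $C$-action on the associated bundle $\xi\colon R\times^A B\to P$ by $[r_1,b_1]\cdot c=[r_2,b_c^{-1}b_1]$, identifies the $C$-quotient $(R\times^A B)/C$ with $R$ via $r\mapsto[r,1]$, and transports the evident $B$-action across that identification. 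Your route is more direct and makes the role of the cocycle identity (associativity) and of $\psi[r,r]=1$ (unit) completely transparent; the paper's detour buys the diagram \eqref{quotient} exhibiting $\rho$ as the $C$-quotient of $\xi$, which is reused essentially verbatim in the proof of Theorem~\ref{cor12341} when connections are brought in, so it is not wasted effort in context. Your closing remarks on smoothness are the right ones: since $\psi$ is a diffeomorphism over $P\times_X P$ and the division map of the free proper $A$-action on $R$ is smooth, the pointwise recipe for $r\cdot b$ is smooth, and freeness/local triviality of the $B$-action follow from those of the $A$-action on $R\to P$ and the $C$-action on $P\to X$; it would be worth one more line confirming transitivity of $B$ on the fibers of $\pi\circ\sigma$, which follows by combining transitivity of $C$ on fibers of $\pi$ with transitivity of $A$ on fibers of $\sigma$.
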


\begin{remark}\label{propo0rem}\mbox{}
\begin{enumerate}
\item[(i)] In the Borel construction in \eqref{psi} the right factor of $R$ is equipped with the 
left $A$"-action through inversion. So in right hand notation $[r_1,r_2]=[r_1a,r_2a]$ 
for $a\in A$ and the fiber $(R\times^A R)_{(p_1, p_2)}$ is the set of $A$-maps 
$R_{p_1}\,\longrightarrow\, R_{p_2}$.

\item[(ii)]
Composing $\psi$ with the map ${Q_P \longrightarrow B}$ from \eqref{eqn:diff-bundle}
we may equivalently view $\psi$ as a $G$"-invariant map $R\times_X R \,\longrightarrow\, B$ (satisfying some additional properties)
for which equation \eqref{cocycle} means $\psi(r_1,r_2)\cdot \psi(r_2,r_3)=\psi(r_1,r_3)$.
\end{enumerate}
\end{remark}

\begin{proof}[{Proof of Proposition~\textup{\ref{propo0}}}]
Suppose first that $\rho\colon R\xrightarrow{\sigma} P\xrightarrow{\pi} X$ is a solution of the  extension problem. 
Then we get the difference map $\psi\,\colon \,R\times_X R\,\longrightarrow\,B$ with $r_1\psi(r_1,r_2)=r_2$. Since $A$ 
is central, $\psi$ factors through the diagonal action of $A$ giving \eqref{psi} satisfying \eqref{cocycle}. 

Conversely, suppose an $A$"-bundle
$\sigma\,\colon\, R\,\longrightarrow\, P$ and
$\psi$ as in \eqref{psi} are given satisfying \eqref{cocycle}.
We must construct a $B$"-action on $R$.
For this we first define a $C$"-action on the bundle ${\xi\,\colon\, R\times^A B \,\longrightarrow\, 
P}$, where $\xi[r,b]=\sigma(r)$. Given $[r_1,b_1] \in R \mathbin{\times^A} B$ and $c\in C$, choose $b_c\in B$ with $\beta(b_c)=c$.
Setting
\[
p_1\coloneqq \sigma(r_1)\, ,\qquad p_2\coloneqq p_1c\, ,
\]
the element $(p_1, p_2, b_c)\in Q_P$ corresponds under 
$\psi$ to a pair $(r_1,r_2)$, normalized under the $A$"-action by the choice of $r_1$. Define the action of $c \in C$ by
\begin{align}\label{Caction}
	[r_1,b_1]\cdot c&\coloneqq [r_2, b_c^{-1}b_1].
\end{align}
This definition is independent of the choice of lift $b_c$.
We have $$\xi([r_1,b_1]\cdot c) = \xi[r_1,b_1]\cdot c,$$ since $p_1\cdot c=p_2$, and the right $C$"-action on $R\times^AB$ commutes with the obvious right $B$"-action. Hence $\xi$ is a $C$"-equivariant $B$-bundle. The quotient
\begin{equation}\label{quotient}
\begin{aligned}\xymatrix{
R\times^A B\ar[r]\ar[d]_\xi&(R\times^A B)/C\ar[d]^{\xi/C}\\
P\ar[r]^-\pi&X=P/C.
}\end{aligned}\end{equation}
is a $B$"-bundle $\xi/C$ that can be identified with $\rho\,\colon\, R\,\longrightarrow\, X$ via the $A$"-map
\begin{equation}\label{Rquotient}
 	R \,\longrightarrow\, (R\times^A B)/C,\quad
	r \,\longmapsto\, [r,\,1].
\end{equation}
This allows us to transport the $B$-operation from $(R\times^A B)/C$ to $R$.
Under the identification $\sigma(r) = \xi[r,1]$ and so $\sigma$ is a $B$"-homomorphism, as required in Definition~\ref{central-extension}.
\end{proof}

\begin{remark}
When $A$ is central and $R$ has a $B$"-action, there is an additional $B$-action on $R\times_A B$ given by ${[r_1,b_1]\star b \coloneqq [r_1b,b^{-1}b_1]}$
which is trivial on $A$. In this case $R\times^AB$ becomes a $C$"-equivariant $B$"-bundle.
\end{remark}

\subsection{Connections on central extensions}

With the topological classification out of the way,
we now study connections on central extensions
of principal $C$"-bundles.
For this we shall assume the existence of an
$\Ad|_B$"-equivariant section $s$ of ${\beta_*\colon \mathfrak{b}\longrightarrow \mathfrak{c}}$. Let
\begin{equation}\label{split}
0\longrightarrow \mathfrak{c}\xrightarrow{\enskip s\enskip} \mathfrak{b} \xrightarrow{\enskip t\enskip} \mathfrak{a} \longrightarrow 0
\end{equation}
be the dual short exact sequence, characterized by the formula
$1=\alpha t+s\beta$. Then $t$ is also $\Ad|_B$"-invariant.
We may regard $s$ as a connection on the bundle ${\beta\colon B \longrightarrow C}$.
This yields pullback connections $\omega^{Q_P}$ in \eqref{eqn:diff-bundle} on all of the
difference bundles $Q_P$ of Definition~\ref{dfn:difference-bundle}.

\begin{theorem}\label{thm:main1}
Assume an\/ $\Ad|_B$"-equivariant splitting \eqref{split} of a central extension of Lie groups as in \eqref{eqn:central-ext}. Let\/ $G$ be a Lie group acting on a manifold\/ $X$ and let\/ ${\pi\,\colon\, P \,\longrightarrow\, X}$
be a\/ $G$"-equivariant\/ $C$"-bundle equipped with\/ $G$"-equivariant
connection\/ $\omega^\pi$. Let\/ $Q_P$ denote the difference bundle of\/ $P$
with its pullback connection\/ $\omega^{Q_P}$.
Up to an isomorphism, every\/ $G$"-equivariant central extension\/
${\rho\,\colon\, R\xrightarrow{\enskip\sigma\enskip} P\xrightarrow{\enskip\pi\enskip} X}$ of\/ $\pi$
with admissible\/ $G$"-equivariant
connection\/ $\omega^\rho$ corresponds to a triple\/ $(\sigma,\omega^\sigma,\psi)$
consisting of a\/ $G$"-equivariant\/ $A$"-bundle
\[
	\sigma\colon R\longrightarrow P
\]
with\/ $G$"-equivariant connection\/ $\omega^\sigma$ and a\/ $G$"-equivariant\/ $A$-homomorphism
\[
\psi\colon R\times^A R \longrightarrow Q_P
\]
over\/ $P\times_X P$ satisfying the cocycle identity \eqref{cocycle} for\/ $\psi$ and also the equation
\begin{equation}
\pr_2^*\omega^\sigma - \pr_1^*\omega^\sigma = \psi^*\omega^{Q_P}.
\end{equation}
\end{theorem}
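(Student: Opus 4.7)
The plan is to bootstrap the topological correspondence of Proposition~\ref{propo0} by tracking connections along the $\Ad|_B$-invariant splitting $\alpha_*t+s\beta_*=\id_\mathfrak{b}$ of \eqref{split}. This splitting gives a canonical decomposition of any $\mathfrak{b}$-valued $1$-form into $\mathfrak{a}$- and $\mathfrak{c}$-components, and the strategy is to use it in both directions: $\omega^\rho\mapsto (\omega^\sigma:=t\circ\omega^\rho,\psi)$ one way, and $(\omega^\sigma,\psi)\mapsto \omega^\rho:=\alpha_*\omega^\sigma+s(\sigma^*\omega^\pi)$ the other. Since Proposition~\ref{propo0} already handles the underlying topological bijection $(\sigma,\psi)\leftrightarrow (\sigma,\rho)$, the content left to prove is the connection bookkeeping.

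\emph{Forward direction.} Given an admissible $G$-equivariant connection $\omega^\rho$, admissibility $\beta_*\omega^\rho=\sigma^*\omega^\pi$ combined with $\alpha_*t+s\beta_*=\id$ yields $\omega^\rho=\alpha_*\omega^\sigma+s(\sigma^*\omega^\pi)$ with $\omega^\sigma:=t\circ\omega^\rho$. The $A$-connection axioms for $\omega^\sigma$ follow from $t\alpha_*=\id_\mathfrak{a}$ and the $\Ad|_B$-invariance of $t$ (which restricts trivially to $A$ because $A$ is central in $B$), and the $G$-invariance is inherited. To obtain the cocycle equation, differentiate the defining relation $r_1\cdot\psi(r_1,r_2)=r_2$ at $(v_1,v_2)\in T(R\times_X R)$, which gives $v_1\cdot\psi+r_1\cdot d\psi(v_1,v_2)=v_2$. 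Apply $\omega^\rho$, using its $B$-equivariance and the Maurer--Cartan identity $\omega^\rho(r_1\cdot d\psi)=\psi^{-1}d\psi=\psi^*\omega_B^{\mathrm{MC}}$, to obtain
\[
\omega^\rho(v_2)=\Ad_{\psi^{-1}}\omega^\rho(v_1)+\psi^*\omega_B^{\mathrm{MC}}(v_1,v_2).
\]
Applying $t$ and using $t\circ\Ad_{\psi^{-1}}=t$ (centrality of $A$), together with the identity $t\circ\omega_B^{\mathrm{MC}}=\omega^\beta$ (the $s$-connection on $\beta\colon B\to C$, whose pullback along $Q_P\to B$ is precisely $\omega^{Q_P}$), produces $\pr_2^*\omega^\sigma-\pr_1^*\omega^\sigma=\psi^*\omega^{Q_P}$ on $R\times_X R$. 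All forms involved descend to $R\times^A R$ under the antidiagonal $A$-action, so the equation is well-posed there as in the statement.

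\emph{Reverse direction.} From a triple $(\sigma,\omega^\sigma,\psi)$ satisfying the cocycle identity and the compatibility equation, construct $\rho\colon R\to X$ with its $B$-action via Proposition~\ref{propo0}, and define $\omega^\rho:=\alpha_*\omega^\sigma+s(\sigma^*\omega^\pi)$. Admissibility and $G$-invariance are immediate from $\beta_*\alpha_*=0$, $\beta_*s=\id_\mathfrak{c}$, and the inputs' invariance. The main obstacle is the $B$-connection axiom $\omega^\rho(r\cdot\xi)=\xi$ for \emph{all} $\xi\in\mathfrak{b}$, not just $\xi\in\mathfrak{a}$: decomposing $\xi=\alpha_*t(\xi)+s\beta_*(\xi)$, the $\mathfrak{a}$-part is handled by the $A$-connection property of $\omega^\sigma$, while the complementary part reduces to proving $\omega^\sigma(r\cdot\xi)=t(\xi)$ along $B$-orbit directions. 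This is precisely what the compatibility equation delivers when evaluated on the tangent vector $(0,\,r_2\cdot\xi)\in T_{(r_1,r_2)}(R\times_X R)$: a short Maurer--Cartan computation gives $d\psi(0,\,r_2\cdot\xi)=\psi(r_1,r_2)\cdot\xi$, so $\psi^*\omega^{Q_P}(0,\,r_2\cdot\xi)=t(\xi)$, which must equal $\pr_2^*\omega^\sigma-\pr_1^*\omega^\sigma$ evaluated on the same vector. The full $B$-equivariance of $\omega^\rho$ is then recovered by running the Maurer--Cartan computation of the forward direction in reverse.

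The chief difficulty is thus the last verification: recognizing the stated identity $\pr_2^*\omega^\sigma-\pr_1^*\omega^\sigma=\psi^*\omega^{Q_P}$ as the infinitesimal form of $r_1\psi(r_1,r_2)=r_2$, and using it to promote the $A$-connection $\omega^\sigma$ to an honest $B$-connection $\omega^\rho$ via the splitting formula. Once this is in place, bijectivity of the triple-to-extension correspondence and its behaviour under $G$-equivariant isomorphisms follow directly from Proposition~\ref{propo0}.
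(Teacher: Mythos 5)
Your proposal is correct and follows essentially the same route as the paper: the splitting $\alpha_* t+s\beta_*=\id_{\mathfrak b}$ applied to $\omega^\rho$ is exactly the bijection of Proposition~\ref{prop-cc}, and your Maurer--Cartan differentiation of $r_1\psi(r_1,r_2)=r_2$, identifying the compatibility equation with $\omega^\sigma(r\cdot\xi_B)=t(\xi_B)$ and $B$-invariance, is exactly Proposition~\ref{prop-cc2}. The only difference is presentational --- the paper packages these two steps through the distortion exact sequences and Theorem~\ref{cor12341}, while you carry out the same computation directly.
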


Theorem~\ref{thm:main1} will be a direct consequence of the more general
Theorem~\ref{cor12341} below. For greater clarity, we shall restrict to
$G=\{1\}$, but the arguments apply in general.
Given a central extension $\rho\colon R\xrightarrow{\sigma} P\xrightarrow{\pi} X$, consider from \eqref{quotient} and \eqref{Rquotient} the diagram
\[\xymatrix{
R\times^A B\ar[r]^-{\xi}\ar[d]_{\overline\pi}&P\ar[d]^\pi\\
R\ar[r]_\rho&X.
}\]
This exhibits $\rho$ as the $C$"-quotient of $\xi$. Here $\overline\pi[p,b]=pb$. We may therefore apply Propositions~\ref{quotient-exact}~and~\ref{ExtKES} (for $\gamma=\id_B$) to get the following exact sequences:
\[\xymatrix{
0\ar[r] & \scrA(R\to X) \ar[r]& \scrA^C(R\times^A B \to P)\ar@{=}[d] \ar[r]^-\tau& \Hom_C(P\times\mathfrak{c},  \mathfrak{b}) \ar[r]& 0\\
0 \ar[r]& \scrA^B(R\to P) \ar[r]& \scrA^B(R\times^A B \to P) \ar[r]^-q& \Omega^1_{B}(P;\mathfrak{c}) \ar[r]& 0\\
}\]
Once we fix a connection on $P\,\longrightarrow\, X$, both sequences split (but recall 
that the split depends on a choice of a base-point, in $\scrA(R\to X)$
and $\scrA^B(R\to P)$ respectively). We describe the following compositions of maps in the diagram:
\begin{align*}
\scrA(R\to X) &\longrightarrow \Omega^1_{B}(P;\mathfrak{c}),\\
\scrA^B(R\to P) &\longrightarrow \Hom_C(P\times\mathfrak{c}, \mathfrak{b}).
\end{align*}
The first map assigns to $\omega^\rho$ the form $\kappa$ defined by
\begin{align*}
	\kappa(v)& \coloneqq \omega^\rho(\hat{v})\mod\mathfrak{a} &\forall~ v\in T_pP,
\,\sigma_*\hat{v}=v\, .
\end{align*}
For the second map, the image of $\omega^\sigma \in \scrA^B(R\to P)$ can be computed from its distortion by
\begin{align*}
	(p,\xi) &\mapsto \tau(\omega^\sigma)(p,\hat\xi) + \hat\xi &\forall ~
\xi \in \mathfrak{c},\, \beta_*\hat\xi = \xi\, .
\end{align*}
Moreover, the connection $H^\pi$ on $P\,\longrightarrow\, X$ determines an obvious map
\[
	\scrA^B(R\to P) \,\longrightarrow\, \scrA(R\to X),\qquad
	H^{\rho}_r \coloneqq \sigma_{*,r}^{-1}\left( H^{\pi}_{\sigma(r)}\right)\cap H^\sigma_r,
\]
which is also obtained from the diagram by using the split. Finally, use the $\Ad|_B$-equivariant section $s$ of 
$\beta_*$ (this is simply a $B$"-biequivariant connection on the $A$"-bundle $\beta\,\colon\, B\,\longrightarrow\, C$, 
where $B$ acts by multiplication on either side and similarly on $C$ via $\beta$)
to get \[
	\scrA(R\to X) \,\cong\, \ker\Big(
	\scrA^B(R\to P) \oplus \Omega^1_{B}(P;\mathfrak{c})
	\,
	\longrightarrow
	\,
	\Hom_C(P\times\mathfrak{c}, \mathfrak{b})
	\Big).
\]

It is useful to make this isomorphism explicit:

\begin{proposition}\label{prop-cc}
There is a bijection
\begin{equation}
	\scrA(R\to X) \overset{\cong}{\,\longrightarrow\,}
	\ker\Big( \scrA^B(R\to P) \xrightarrow{\enskip T\enskip} \Hom(P\times_C \mathfrak{b}, \mathfrak{a}) \Big) \oplus \scrA(P\to X)
    \label{prop-cc-eqn}
\end{equation}
Here\/ $T(\omega_A)\,=\,\tau + t\circ\pr_2$ for\/ $\omega^\sigma \,\in\, \scrA^B(R\to P)$ is essentially the distortion of\/ $\omega^\sigma$, namely
\[
	T(\omega_A)(p,\xi_B) = t(\xi_B) - \omega_A(r\cdot \xi_B),\qquad \forall~ r\in R: \sigma(r)=p, \xi_B \in \mathfrak{b}.
\]
\end{proposition}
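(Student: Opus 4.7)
The plan is to construct the bijection directly using the $\Ad|_B$"-equivariant splitting $1_\mathfrak{b}=\alpha t+s\beta$ from \eqref{split}, which gives a canonical decomposition of any $\mathfrak{b}$"-valued $1$"-form on $R$ into an $\mathfrak{a}$"-valued part via $t$ and a $\mathfrak{c}$"-valued part via $\beta$. The claim is that, under this decomposition, the connection axioms for $\omega^\rho \in \scrA(R\to X)$ transform into the axioms for a $B$"-equivariant $A$"-connection on $\sigma$ in $\ker T$ together with a connection on $\pi$.

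For the forward direction, given $\omega^\rho \in \scrA(R\to X)$, I would set $\omega^\sigma := t\circ\omega^\rho$ and $\sigma^*\omega^\pi := \beta\circ\omega^\rho$. First I would check that $\beta\circ\omega^\rho$ descends to a form $\omega^\pi$ on $P$: it is $\sigma$"-horizontal because $\beta\alpha=0$, and $A$"-invariant because $\Ad|_A$ acts trivially on $\mathfrak{b}$ (as $A$ is central in $B$). Next I would verify that $\omega^\sigma$ is a $B$"-equivariant $A$"-connection on $\sigma$ using $t\alpha=\id_{\mathfrak{a}}$ and the triviality of $\Ad|_B$ on $\mathfrak{a}$, and that $\omega^\pi$ is a connection on $\pi$ via the computation $\omega^\pi(p\cdot\beta\xi_B)=\beta(\omega^\rho(r\cdot\xi_B))=\beta\xi_B$ for any lift $\xi_B\in\mathfrak{b}$. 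Finally, $T(\omega^\sigma)(p,\xi_B)=t(\xi_B)-t(\omega^\rho(r\cdot\xi_B))=0$, so $\omega^\sigma\in\ker T$.

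For the inverse, given $(\omega^\sigma,\omega^\pi)$ with $T(\omega^\sigma)=0$, I would set $\omega^\rho:=\alpha\omega^\sigma+s\sigma^*\omega^\pi$ and verify the $B$"-connection axioms. The reproducing axiom $\omega^\rho(r\cdot\xi_B)=\xi_B$ follows by rewriting $T(\omega^\sigma)=0$ as $\omega^\sigma(r\cdot\xi_B)=t(\xi_B)$, using $\sigma^*\omega^\pi(r\cdot\xi_B)=\beta\xi_B$, and invoking the splitting identity $\alpha t+s\beta=\id_{\mathfrak{b}}$. For $B$"-equivariance, the $\alpha\omega^\sigma$ summand is $B$"-invariant and agrees with its $\Ad_{b^{-1}}$"-transform since $\Ad|_B$ acts trivially on $\alpha(\mathfrak{a})$; the $s\sigma^*\omega^\pi$ summand is handled using the $C$"-equivariance of $\omega^\pi$ (which gives $\sigma^*\omega^\pi(v\cdot b)=\Ad_{\beta(b)^{-1}}\sigma^*\omega^\pi(v)$) together with the $\Ad|_B$"-equivariance of $s$. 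The two constructions are mutually inverse by componentwise application of $\alpha t+s\beta=\id_{\mathfrak{b}}$.

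The main point requiring care is the $B$"-equivariance of $\omega^\rho$ in the inverse construction, which crucially relies on the $\Ad|_B$"-equivariance of the splitting $s$ (the hypothesis underlying Proposition~\ref{ExtKES}); without this, $\omega^\rho$ would generally fail to be a $B$"-connection. All the remaining verifications are routine bookkeeping, provided one maintains a clear separation between the $\mathfrak{a}$", $\mathfrak{b}$", and $\mathfrak{c}$"-valued parts of the forms and the $A$", $B$", and $C$"-equivariance conditions they respectively satisfy.
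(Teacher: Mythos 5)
Your proposal is correct and takes essentially the same route as the paper's proof, which likewise defines the bijection by $\omega^\sigma \coloneqq t(\omega^\rho)$, $\hat\omega^\rho \coloneqq \beta(\omega^\rho)$ with inverse $\omega^\rho \coloneqq \alpha\circ\omega^\sigma + s\circ\hat\omega^\rho$ and matches the connection axioms componentwise under the splitting $\alpha t + s\beta = \id_{\mathfrak{b}}$. You simply spell out the verifications (descent of $\beta\circ\omega^\rho$ to $P$, the kernel condition $T(\omega^\sigma)=0$, and the $\Ad|_B$-equivariance of $s$ needed for $B$-equivariance) that the paper leaves as ``easy to check.''
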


\begin{proof}
$\scrA(R\to X)$ consists of connection $1$"-forms $\omega^\rho \,\in \,\Omega^1(R;\mathfrak{b})$ satisfying
\begin{enumerate}
\item
$\omega^\rho\colon TR \,\longrightarrow\, \mathfrak{b}$ is $B$-equivariant,
\item
$\omega^\rho(r\cdot \xi_B)=\xi_B$ for all $\xi_B \in \mathfrak{b}$.
\end{enumerate}
The right hand side of \eqref{prop-cc-eqn} is bijective to all
pairs $(\omega^\sigma, \hat\omega^\rho) \in \Omega^1(R;\mathfrak{a}) \times \Omega^1(R;\mathfrak{c})$ satisfying:
\begin{enumerate}
\item
$\omega^\sigma$ $B$-invariant
\item
$\omega^\sigma(r\cdot \xi_A)=\xi_A$ for all $\xi_A \in \mathfrak{a}$
\item
$T(\omega^\sigma)=0$
\item
$\hat\omega^\rho\colon TR \longrightarrow \mathfrak{c}$ is $B$-equivariant
\item
$\hat\omega^\rho(r\cdot\xi_A) = 0$ for all $\xi_A \in \mathfrak{a}$
\item
$\hat\omega^\rho(p\cdot \xi_C)=\xi_C$ for all $\xi_C \in \mathfrak{c}$.
\end{enumerate}
This is because from iv) and v)~we see that $\hat\omega^\rho=\sigma^*\omega^\pi$ is the pullback of a unique form $\omega^\pi$ in $\Omega^1(P;\mathfrak{c})$. In this notation, the bijection \eqref{prop-cc-eqn} is then given by
\begin{align*}
\omega^\sigma &\coloneqq t(\omega^\rho),
&\hat\omega^\rho &\coloneqq \beta(\omega^\rho),\\
\omega^\rho &\coloneqq \alpha\circ \omega^\sigma + s \circ \hat\omega^\rho.
\end{align*}
It is easy to check that these formulas define maps that are inverse to each other. Also properties i), ii) correspond to properties i)--vi) under these bijections.
\end{proof}

\begin{proposition}\label{prop-cc2}
The kernel of\/ $\scrA^B(R\to P) \xrightarrow{\enskip T\enskip} \Hom(P\times_C \mathfrak{b}, \mathfrak{a})$ can be identified with the 
set of connections\/ $\omega^\sigma$ on\/ $\sigma\colon R\,\longrightarrow\, P$ with the additional property
\[
	\pr_2^*\omega^\sigma - \pr_1^*\omega^\sigma = \psi^*t(\theta_B) \in \Omega^1(R\times_X R; \mathfrak{a}),
\]
where\/ $\theta_B$ denotes the Maurer--Cartan form on\/ $B$.
\end{proposition}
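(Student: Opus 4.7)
\emph{Plan.} The proof is a direct equivalence check between the pointwise condition $T(\omega^\sigma)=0$, which by Proposition~\ref{prop-cc} reads $\omega^\sigma(r\cdot\xi_B) = t(\xi_B)$ for all $r \in R$, $\xi_B \in \mathfrak{b}$, and the global 1-form identity $\pr_2^*\omega^\sigma-\pr_1^*\omega^\sigma=\psi^*t(\theta_B)$ on $R\times_X R$. The key geometric input is the defining relation $r_2 = r_1 \cdot \psi(r_1,r_2)$ of the difference map, together with the Maurer--Cartan identity $w = b\cdot \theta_B(w)$ for $w\in T_bB$.

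For the implication $T(\omega^\sigma)=0\Rightarrow\text{cocycle}$, I fix $(r_1,r_2)\in R\times_X R$ and a tangent vector $(v_1,v_2)$ at this point. Writing $b=\psi(r_1,r_2)$ and $w=d\psi_{(r_1,r_2)}(v_1,v_2)\in T_bB$, differentiating $r_2 = r_1\cdot b$ and applying the Leibniz rule for the action differential $R\times B\to R$ yields
\[
v_2 = v_1\cdot b + r_1\cdot w.
\]
The $B$-invariance of $\omega^\sigma$ (noting that the $B$-action on $\mathfrak{a}$ is trivial since $A$ is central in $B$) gives $\omega^\sigma(v_1\cdot b) = \omega^\sigma(v_1)$. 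The associativity of the action, combined with $w = b\cdot \theta_B(w)$, rewrites $r_1\cdot w = r_1\cdot(b\cdot\theta_B(w)) = r_2\cdot\theta_B(w)$. Applying the hypothesis $T(\omega^\sigma)=0$ at $r_2$ with $\xi_B = \theta_B(w)$ produces $\omega^\sigma(r_2\cdot\theta_B(w)) = t(\theta_B(w))$. Assembling, $\omega^\sigma(v_2) - \omega^\sigma(v_1) = t(\theta_B(d\psi(v_1,v_2)))$, which is exactly $(\psi^*t(\theta_B))(v_1,v_2)$.

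For the converse, I test the 1-form identity against the distinguished tangent vectors arising from the $B$-action itself. Given $r\in R$ and $\xi_B\in\mathfrak{b}$, the curve $s\mapsto (r,\,r\cdot\exp(s\xi_B))$ lies in $R\times_X R$ (since $\rho$ is constant along $B$-orbits), and its tangent at $s=0$ is $(0,r\cdot\xi_B)$. Here $\psi(r,r\cdot\exp(s\xi_B)) = \exp(s\xi_B)$, so $d\psi$ maps to $\xi_B$ and $\theta_B$ is the identity at $e$; the right-hand side evaluates to $t(\xi_B)$ while the left-hand side gives $\omega^\sigma(r\cdot\xi_B)-\omega^\sigma(0) = \omega^\sigma(r\cdot\xi_B)$. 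This recovers $T(\omega^\sigma)=0$ pointwise.

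The main technical issue to keep clean is the bookkeeping of the two simultaneous roles played by $B$: as the principal structure group of $\rho\colon R\to X$ and (restricted to $A$) of $\sigma\colon R\to P$, and as the ``$G$''-group acting $\sigma$-equivariantly in the sense of Section~\ref{sec:distortion}. Once this is sorted, the essential content is the single identity $r_1\cdot w = r_2\cdot\theta_B(w)$ which converts the differential of $\psi$ into an infinitesimal right $B$-action on $r_2$, producing the Maurer--Cartan factor on the right-hand side of the cocycle equation.
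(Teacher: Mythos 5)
Your forward direction is correct and is essentially the paper's own argument: both decompose a tangent vector to $R\times_X R$ via the diffeomorphism $(r,b)\mapsto(r,rb)$ as $v_2=v_1\cdot b+r_1\cdot w$, kill the first term by $B$-invariance (using that centrality of $A$ makes $\Ad(b)|_{\mathfrak a}$ trivial), and convert the second term via $r_1\cdot w=r_2\cdot\theta_B(w)$ and the kernel condition into $t(\theta_B(w))$. Your converse, however, has a genuine gap. The right-hand side of the proposition consists of \emph{arbitrary} connections on $\sigma\colon R\longrightarrow P$ satisfying the cocycle identity, whereas the left-hand side sits inside $\scrA^B(R\to P)$; so the converse must establish not only $T(\omega^\sigma)=0$ but also that the cocycle identity forces $\omega^\sigma$ to be $B$-invariant. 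You only test the identity against the vertical family $(0,r\cdot\xi_B)$, which recovers $T(\omega^\sigma)=0$ but says nothing about invariance; indeed your plan paragraph presupposes $B$-invariance throughout, which silently weakens the statement being proved.

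The omission is easily repaired with the same technique: test the identity against the complementary family of tangent vectors $(v,v\cdot b)$, i.e.\ tangents to curves $s\mapsto(\gamma(s),\gamma(s)b)$ for fixed $b\in B$. Along such a curve $\psi\equiv b$ is constant, so $\psi^*t(\theta_B)$ vanishes on these vectors and the identity reads $\omega^\sigma(v\cdot b)=\omega^\sigma(v)$, which is exactly the required $B$-invariance. Together with your vertical family these vectors span $T(R\times_X R)$ by the very decomposition you already use, so the two tests suffice. This is precisely what the paper does by setting $v_{r_1}=0$ and $\xi_b=0$ in turn in the single identity $\omega^\sigma(v_{r_1}\cdot b)+\omega^\sigma(r_1\cdot\xi_b)-\omega^\sigma(v_{r_1})=t(\theta_B(\xi_b))$.
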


Recall here that $\psi\,\colon\, R\times_X R \,\longrightarrow\, B$ encodes the $B$"-action on $R$, see Remark~\ref{propo0rem}.

\begin{proof}
The map $(\id_R, \psi)$ is an inverse diffeomorphism to the action
$$R\times B \,\longrightarrow\, R\times_X R\, ,\ \ (r,b)\longmapsto (r,rb)\, .$$ Taking derivatives, this means that 
any $(v_{r_1}, w_{r_2}) \in {T_{(r_1,r_2)}(R_1\times_X R_2)}$ may be expressed as
\begin{equation}
w_{r_2} = v_{r_1}\cdot b + r_1\cdot \xi_b\quad\text{for}\quad b\in B, \xi_b \in T_b B.
\end{equation}
For a $B$-invariant $\omega^\sigma$ in the kernel of $T$ we then compute at $(v_{r_1},w_{r_2})$
\begin{align*}
\omega^\sigma(w_{r_2}) - \omega^\sigma(v_{r_1}) &= \omega^\sigma(v_{r_1}\cdot b) + \omega^\sigma(r_1\cdot \xi_b) - \omega^\sigma(v_{r_1})\\
&= \omega^\sigma(r_1 b \cdot b^{-1}\xi_b) = b^{-1}t(\xi_b).
\end{align*}
The converse follows from the same computation
\[
 \omega^\sigma(v_{r_1}\cdot b) + \omega^\sigma(r_1\cdot \xi_b) - \omega^\sigma(v_{r_1}) = b^{-1}t(\xi_b)
\]
by setting $v_{r_1}=0$ it follows that $\omega^\sigma\in \ker(T)$ and $\xi_b=0$ proving $B$"-invariance.
\end{proof}

Combining Propositions~\ref{prop-cc} and \ref{prop-cc2} yields:

\begin{theorem}\label{cor12341}
Assume an\/ $\Ad|_B$"-equivariant splitting\/ $(s,r)$ of a central extension of Lie groups \eqref{eqn:central-ext} as in \eqref{split}. Let\/ $G$ be a Lie group acting on a manifold\/ $X$ and let
\[
\rho\colon R\xrightarrow{\enskip\sigma\enskip} P\xrightarrow{\enskip\pi\enskip} X
\]
be a\/ $G$"-equivariant central extension.
The space\/ $\scrA^G(R\to X)$ of $G$"-equivariant connections on $\rho$ is parameterized by pairs\/ $(\omega^\sigma, \omega^\pi)$ of\/ $G$"-equivariant connections on\/ $\sigma$ and\/ $\pi$ satisfying
\[
	\pr_2^*\omega^\sigma - \pr_1^*\omega^\sigma = \psi^*t(\theta_B) \in \Omega^1(R\times_X R; \mathfrak{a}),
\]
where\/ $\theta_B$ denotes the Maurer--Cartan form on\/ $B$ and where, as in Remark~\textup{\ref{propo0rem}}, the map\/ $\psi\,\colon\, R\times_X R \,\longrightarrow\, B$ encodes the\/ $B$"-action on\/ $R$.
\end{theorem}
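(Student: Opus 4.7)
The plan is to chain Propositions~\ref{prop-cc} and~\ref{prop-cc2} together, with an additional check that the constructions are natural enough to carry the $G$-equivariance through. For notational ease, I would first treat the case $G=\{1\}$ — which is really the content of the combination — and then observe that every map used is canonical in the $G$-equivariant category.

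First I would apply Proposition~\ref{prop-cc}. This gives a bijection between $\scrA(R\to X)$ and the set of pairs $(\omega^\sigma, \omega^\pi)$, where $\omega^\sigma$ lies in the kernel of the distortion-like map $T\colon \scrA^B(R\to P)\to \Hom(P\times_C\mathfrak b,\mathfrak a)$ and $\omega^\pi \in \scrA(P\to X)$. The explicit inverse formula $\omega^\rho = \alpha\circ\omega^\sigma + s\circ\sigma^*\omega^\pi$ from the proof of Proposition~\ref{prop-cc} uses only the $\Ad|_B$-equivariant splitting $(s,t)$ and the pullback along $\sigma$, both of which commute with arbitrary group actions; in particular, $\omega^\rho$ is $G$-invariant if and only if both $\omega^\sigma$ and $\omega^\pi$ are. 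This yields the $G$-equivariant variant of the bijection without further work.

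Next I would invoke Proposition~\ref{prop-cc2} to rewrite the condition $T(\omega^\sigma)=0$ as the equation
\[
\pr_2^*\omega^\sigma - \pr_1^*\omega^\sigma = \psi^*t(\theta_B) \in \Omega^1(R\times_X R;\mathfrak a),
\]
where $\psi\colon R\times_X R \to B$ is the difference map encoding the $B$-action on $R$, as identified in Remark~\ref{propo0rem}(ii). Combining the two bijections gives the desired parameterization of $\scrA(R\to X)$ by pairs $(\omega^\sigma,\omega^\pi)$ satisfying the stated cocycle-type identity. The admissibility condition $\beta_*\omega^\rho = \sigma^*\omega^\pi$ (in the language of Theorem~\ref{thm:main1}) is built into the very construction $\omega^\rho=\alpha\omega^\sigma + s\sigma^*\omega^\pi$, since applying $\beta_*$ to this formula and using $\beta_*\alpha=0$ and $\beta_*s=\id_{\mathfrak c}$ returns $\sigma^*\omega^\pi$.

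The only step that requires genuine attention is verifying that $G$-equivariance propagates through the identifications. Since the splitting $(s,t)$ is $\Ad|_B$-equivariant and $G$ acts via $\gamma\colon G\to B$, the maps $\alpha, s, t, \beta_*$ are all $G$-equivariant when $\mathfrak a,\mathfrak b,\mathfrak c$ are given the induced $G$-actions; pullback along the $G$-equivariant maps $\sigma$, $\pr_1$, $\pr_2$ preserves $G$-invariance; and $\psi$ is $G$-invariant as a map to $B$ precisely because $R\to P$ is $G$-equivariant and $\psi$ expresses relative position in the $B$-torsor fibers. Hence both bijections restrict to the $G$-equivariant subsets on both sides, and the combined bijection is the statement of Theorem~\ref{cor12341}. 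The main obstacle, if any, is just the bookkeeping: making sure that in the identification $\omega^\rho \leftrightarrow (\omega^\sigma,\omega^\pi)$ with $\omega^\sigma=t\omega^\rho$ and $\sigma^*\omega^\pi=\beta_*\omega^\rho$, the $G$-invariance of $\omega^\rho$ is equivalent to simultaneous $G$-invariance of $\omega^\sigma$ and $\omega^\pi$, which follows from $1=\alpha t + s\beta_*$ applied fiberwise.
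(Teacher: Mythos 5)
Your proposal matches the paper's proof exactly: the paper derives Theorem~\ref{cor12341} by simply combining Propositions~\ref{prop-cc} and~\ref{prop-cc2}, having noted beforehand that the arguments are carried out for $G=\{1\}$ but apply verbatim in the equivariant setting. Your additional bookkeeping on how $G$-invariance passes through the splitting $1=\alpha t+s\beta_*$ is a correct elaboration of what the paper leaves implicit.
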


\section{Lifting Gerbes and Connective Structures}

The goal of this section is to interpret the conditions appearing in
Theorem~\ref{thm:main1} in the terminology of gerbes.
Continue to assume an $\Ad|_B$"-equivariant splitting \eqref{split} of a central extension of Lie groups as in \eqref{eqn:central-ext}. Let $G$ be a Lie group acting on a manifold $X$ and let ${\pi\colon P \longrightarrow X}$ be a $G$"-equivariant $C$"-bundle equipped with $G$"-equivariant
connection $\omega^\pi$.

\subsection{Background on gerbes}

\begin{definition}
Let $A$ be an abelian Lie group. Let $\pi_i\,\colon\, P_i \,\longrightarrow\, X$ for $i=1,2$ be
principal $A$-bundles. The \emph{tensor product}
$P_1 \otimes_A P_2 \coloneqq (P_1 \times_X P_2) / A$
is another $A$-bundle over $X$, where the group $A$ acts by $(p_1,p_2)a=(p_1a,p_2a^{-1})$. Equivalently,
form the $(A\times A)$"-bundle $P_1\times_X P_2$ and take the associated $A$-bundle
using the multiplication 
homomorphism $A\times A \,\longrightarrow\, A$. Hence $P_1 \otimes_A P_2 = \Delta^*(P_1\times^A P_2)$ is the pullback of the Borel construction along the diagonal.
Given connections $\omega^{\pi_i}$ on $P_i$, the \emph{tensor product connection} is
\[
	\omega[v_{p_1},w_{p_2}] \coloneqq \omega^{\pi_1}(v_{p_1}) + \omega^{\pi_2}(v_{p_2}),\quad
	\forall~ [v_{p_1},w_{p_2}] \in T_{[p_1,p_2]} P_1\otimes_A P_2.
\]
\end{definition}

\begin{definition}
An \emph{$A$-gerbe} $\scrG$ on $X$ is given by a fiber bundle $Y\,\longrightarrow\, X$ together with a principal $A$"-bundle 
$Q\,\longrightarrow\, 
Y\times_X Y$ and a `multiplication' $A$-bundle homomorphism
\begin{equation}\label{eqn:defm}
m\colon \pr_{12}^*Q\otimes_A \pr_{23}^*Q \,\longrightarrow\, \pr_{13}^*Q,\quad
(f,g) \mapsto m(f,g)=g\circ f
\end{equation}
which is required to be associative $(h\circ g)\circ f=h\circ (g\circ f)$. A
gerbe is \emph{$G$"-equivariant} if $X, Y, Q$ are equipped with $G$"-actions
and all of the involved maps are $G$"-equivariant.
A \emph{connection} or \emph{connective structure} on a gerbe $\scrG$ is a connection on ${Q \longrightarrow
Y\times_X Y}$ for which \eqref{eqn:defm} is parallel. It is called
a \emph{$G$"-equivariant connection} if the $G$"-action on $Q$ is parallel.
\end{definition}

\begin{remark}
One pictures a gerbe $\scrG$ as a ``bundle of groupoids'', parameterized by $x\in X$. The category $\scrG_x$ has objects $\pi^{-1}(x)$. The morphism set in $\scrG_x$ from $y_1$ to $y_2$ is the fiber $\Hom_{\scrG_x}(y_1,y_2) = Q_{(y_1,y_2)}$. Composition, also sometimes called the multiplication in the groupoid, is induced by $m$, hence the terminology. The existence of identity morphisms and of inverses are a consequence of the invertibility of $m$,
see Murray~\cite{MR2681698}.
\end{remark}

\subsection{Lifting gerbes}

\begin{proposition}\label{obsExt}
Assume an\/ $\Ad|_B$"-equivariant splitting\/ $(s,r)$ of a central extension of Lie groups \eqref{eqn:central-ext} as in \eqref{split}.
Every central lifting problem as in Definition~\textup{\ref{central-extension}}
with fixed\/ $G$"-equivariant principal\/ $C$"-bundle\/
${\pi\colon P\longrightarrow X}$ defines a\/ $G$"-equivariant \emph{lifting $A$-gerbe}\/ $\scrG_\pi$ which is naturally equipped with a connective structure.
\end{proposition}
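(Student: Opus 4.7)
The plan is to construct the gerbe $\scrG_\pi$ concretely from the data of Definition~\ref{dfn:difference-bundle}, then to equip its defining $A$-bundle with a pullback connection induced by the splitting $s$.

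For the gerbe itself, I would take $Y \coloneqq P$ (with fiber bundle structure $\pi\colon P\to X$), take $Q \coloneqq Q_P$ to be the difference bundle of Definition~\ref{dfn:difference-bundle}, and take $m$ to be the multiplication map of \eqref{eqn:multiplication}. Associativity $(h\circ g)\circ f = h\circ(g\circ f)$ is immediate from associativity of multiplication in $B$. For $G$-equivariance, the difference map $\delta_P\colon P\times_X P \to C$ is $G$-invariant because $G$ acts on $P$ by $C$-equivariant maps. Hence the $G$-action on $P\times_X P$ lifts to $Q_P$ by acting trivially on the $B$-factor, and with respect to this action the multiplication $m$ is clearly $G$-equivariant.

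For the connective structure I would proceed in two steps. First, use the splitting to equip the principal $A$-bundle $\beta\colon B\to C$ with the canonical connection $\omega^\beta \coloneqq t\circ\theta_B$, where $\theta_B$ is the left Maurer--Cartan form. Since $t$ is $\Ad|_B$-equivariant and takes values in the Lie algebra of the central subgroup $A$ (so that $\Ad|_B$ acts trivially on $\mathfrak{a}$), the form $\omega^\beta$ is simultaneously left- and right-$B$-invariant, and one checks the two connection axioms \eqref{connection1form-axioms} using $t\circ\alpha = \id_{\mathfrak{a}}$. Pulling $\omega^\beta$ back along the map $Q_P\to B$ of \eqref{eqn:diff-bundle} via Proposition~\ref{fact:pullback-connections} then yields a connection $\omega^{Q_P}$ on $Q_P$, which is automatically $G$-equivariant because the pullback factors through a $G$-invariant map into $B$ (where $G$ acts trivially).

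The main step, and the one I expect to be the main obstacle, is to verify that the gerbe product $m$ is parallel for $\omega^{Q_P}$. Unwinding the tensor product connection on $\pr_{12}^*Q\otimes_A \pr_{23}^*Q$, this reduces to showing that ordinary group multiplication $\mu\colon B\times B\to B$ is parallel for $\omega^\beta$: namely, that if $v\in T_b B$ and $w\in T_{b'}B$ satisfy $\omega^\beta(v) = \omega^\beta(w) = 0$, then $\omega^\beta\bigl(d\mu_{(b,b')}(v,w)\bigr) = \omega^\beta(v\cdot b' + b\cdot w) = 0$. Using left-$B$-invariance of $\omega^\beta$ this collapses to the statement that $\Ad(b'^{-1})\,s(\mathfrak{c}) + s(\mathfrak{c}) \subseteq \ker t$, which follows from the $\Ad|_B$-equivariance of $t$ combined with $t\circ s = 0$; this is precisely the place where the hypothesis on the splitting is used essentially.

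Once $\mu$ is verified to be parallel, naturality of pullback connections (Proposition~\ref{fact:pullback-connections}) transports this through the Cartesian square \eqref{eqn:diff-bundle} to show that $m\colon \pr_{12}^*Q_P\otimes_A \pr_{23}^*Q_P \to \pr_{13}^*Q_P$ is parallel as an $A$-bundle morphism, completing the construction of $\scrG_\pi$ together with its $G$-equivariant connective structure.
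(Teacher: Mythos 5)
Your construction is exactly the paper's: take $Y=P$, $Q=Q_P$, $m$ as in \eqref{eqn:multiplication}, and pull back the connection $t\circ\theta_B$ on $\beta\colon B\to C$ along $Q_P\to B$. The paper's proof is essentially a three-line assertion of this, so your additional verification that $m$ is parallel (via $\Ad|_B$-invariance of $t$ and $t\circ s=0$) correctly fills in the details it leaves implicit; there is no gap.
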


\begin{proof}
Using the notation of Definition~\ref{dfn:difference-bundle}, take
$Y=P$ and take for $Q$ the difference bundle $Q_P$ defined in
\eqref{eqn:diff-bundle}. 
The multiplication homomorphism
is given by \eqref{eqn:multiplication}.
The pullback connection\/ $\omega^{Q_P}$ on the difference bundle determines
a\/ $G$"-equivariant connection on\/ $\scrG_\pi$.
\end{proof}

\begin{remark}
For $A=\C^*$ it has been shown that conversely every gerbe is `stably' isomorphic to one given by a central lifting 
problem. Here we recall that as gerbes are a two-categorical notion, the usual notion of strict isomorphism should be weakened. The correct notion is that of stable isomorphism, for which we refer to Murray~\cite[Def.~5.3]{MR2681698}. For the proof one first shows that a gerbe is classified by its Dixmier--Douady class in 
$H^2(X,\,\underline{\C}^*_X) \,=\, H^3(X;\,\Z)$. \end{remark}

\begin{definition}
Let $\scrG=(Y\to X, Q,m)$ be a gerbe with connection. A \emph{parallel trivialization}
of $\scrG$ is an $A$"-bundle $\rho\colon R\longrightarrow X$ together with a connection-preserving isomorphism
${R\times^A R \longrightarrow Q}$ over $Y\times_X Y$, compatible with \eqref{eqn:defm}
in the obvious way, see Murray~\cite{MR2681698}.
When $\scrG$ is $G$"-equivariant, we require $R$ to have a $G$"-action and the isomorphism
to be $G$"-equivariant.
\end{definition}

With this terminology, we may reformulate Theorem~\ref{thm:main1} as follows:

\begin{theorem}\label{thm:main2}
Assume an\/ $\Ad|_B$"-equivariant splitting\/ $(s,r)$ of a central extension of Lie groups \eqref{eqn:central-ext} as in \eqref{split}.
Let\/ ${\pi\colon P \longrightarrow X}$ be a\/ $G$"-equivariant principal\/ $C$"-bundle.
Parallel $G$"-equivariant trivializations of the lifting gerbe\/ $\mathcal{G}_\pi$ of\/ $P$
are in one-to-one correspondence with central extensions\/ ${\rho\colon R\xrightarrow{\sigma} P\xrightarrow{\pi} X}$ with admissible\/ $G$"-equivariant connections.
\end{theorem}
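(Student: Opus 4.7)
The plan is to recognize Theorem~\ref{thm:main2} as a direct translation of Theorem~\ref{thm:main1} into the gerbe language. A parallel $G$"-equivariant trivialization of $\mathcal{G}_\pi$ is by definition a $G$"-equivariant $A$"-bundle $\rho\colon R\to X$ together with a $G$"-equivariant, connection-preserving isomorphism of $A$"-bundles $\psi\colon R\times^A R\to Q_P$ over $P\times_X P=Y\times_X Y$, compatible with the multiplication $m$ of \eqref{eqn:defm} (on the $Q_P$"-side given by \eqref{eqn:multiplication}, on the $R\times^A R$"-side by composition of $A$"-equivariant maps, cf.\ Remark~\ref{propo0rem}). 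The strategy is to match this data term-by-term with the data of Theorem~\ref{thm:main1}.

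Forgetting the connective structure, Proposition~\ref{propo0} already identifies the topological data: pairs $(\sigma,\psi)$ consisting of a $G$"-equivariant $A$"-bundle $\sigma\colon R\to P$ and a $G$"-equivariant $A$"-homomorphism $\psi\colon R\times^A R\to Q_P$ satisfying the cocycle identity \eqref{cocycle} correspond precisely to $G$"-equivariant central extensions. Under the identifications just listed, \eqref{cocycle} is exactly compatibility with $m$, and the $G$"-equivariance conditions match. What remains is to match the connective data on both sides.

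For this, I would argue that the condition $\pr_2^*\omega^\sigma-\pr_1^*\omega^\sigma = \psi^*\omega^{Q_P}$ of Theorem~\ref{thm:main1} is precisely the statement that $\psi$ is parallel. View $R\times^A R$ as the tensor product of $\pi^*R$ with the opposite of $\pi^*R$ (this uses the inversion convention of Remark~\ref{propo0rem}(i)). A short verification shows that $\pr_2^*\omega^\sigma-\pr_1^*\omega^\sigma$ vanishes on the infinitesimal diagonal $\mathfrak{a}$"-action $\xi\mapsto(r_1\xi,r_2\xi)$ and restricts to the identity on the residual $A$-fibers $[r_1,r_2]\cdot a=[r_1,r_2a]$, so it descends to a genuine connection $1$"-form on the $A$"-bundle $R\times^A R\to P\times_X P$ — and this is exactly the tensor product connection. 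Hence the displayed identity says precisely that $\psi^*\omega^{Q_P}$ equals this tensor-product connection, i.e.\ that $\psi$ preserves connections. Combining with the first step, Theorem~\ref{thm:main1} then yields the desired bijection.

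The main obstacle is purely bookkeeping around conventions: one must carefully check the inversion convention of Remark~\ref{propo0rem}(i) so that the sign in $\pr_2^*\omega^\sigma-\pr_1^*\omega^\sigma$ agrees with the tensor-product convention, and one must verify that $m$ in \eqref{eqn:multiplication} is automatically parallel for the pullback connection $\omega^{Q_P}$ (this is immediate since $m$ is induced by group multiplication in $B$, along which $\omega^{Q_P}$ pulled back from the $\Ad|_B$"-equivariant section $s$ is respected). Once these checks are made, the correspondence is a formal consequence of Theorem~\ref{thm:main1} and Proposition~\ref{obsExt}.
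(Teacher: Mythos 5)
Your proposal is correct and takes essentially the same route as the paper, which gives no separate proof of Theorem~\ref{thm:main2} beyond declaring it a reformulation of Theorem~\ref{thm:main1}: your term-by-term matching (Proposition~\ref{propo0} for the topological data $(\sigma,\psi)$, and the observation that $\pr_2^*\omega^\sigma-\pr_1^*\omega^\sigma$ descends to the natural tensor-product connection on $R\times^A R\to P\times_X P$, so that the displayed identity is exactly parallelism of $\psi$) is precisely the intended translation. You in fact supply more of the bookkeeping than the paper records at this point.
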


\section{Vanishing of Neeb's Obstructions}

Consider a $C$"-bundle $P\,\longrightarrow\, X$ and a central extension of groups \eqref{eqn:central-ext} with 
$A\,=\,\C^*$.
Using crossed modules, Neeb associates in \cite{MR2208114} a cohomology class $H^3(X,\,\C)$ and shows that its 
vanishing is necessary for the existence of a central extension of $P$.
It was shown in \cite{gengoux} that up to torsion this is the full obstruction.
In this section we shall address the question, raised by Neeb \cite[Problem~VI]{MR2208114}, which obstruction classes may occur, in finite dimensions. This gives a different approach to an argument by Brylinski, see \cite[Chapter~6]{Bry}.

\begin{theorem}\label{thm:obstructions}
Let\/ $C$ be a finite-dimensional, connected Lie group. Then all of Neeb's obstruction classes for smooth principal\/ $C$"-bundles vanish.
\end{theorem}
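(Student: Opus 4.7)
The plan is to reinterpret Neeb's obstruction as a universal characteristic class and then appeal to the well-known structure of the cohomology of classifying spaces. First I would unpack Neeb's crossed-module construction in~\cite{MR2208114} and verify that it is natural: for a smooth map $f\colon X'\to X$ and a principal $C$"-bundle $\pi\colon P \to X$, the obstruction class of $f^*\pi$ equals $f^*$ applied to the obstruction class of $\pi$. Combined with the fact that the obstruction depends only on the isomorphism class of $\pi$, and with homotopy invariance of singular cohomology with $\C$"-coefficients, this pins down the obstruction for any $\pi\colon P\to X$ as the pullback of a single universal class $o(B)\in H^3(BC,\C)$ along a classifying map $X\to BC$. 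Thus the theorem reduces to the vanishing of $o(B)$.

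Next I would reduce to the compact case. Let $K\subset C$ be a maximal compact subgroup. Since $C$ is finite-dimensional and connected, Cartan--Iwasawa--Mal\textquoteright cev gives $C/K\cong \R^n$, so the inclusion $K\hookrightarrow C$ is a homotopy equivalence and hence induces a homotopy equivalence $BK\to BC$. In particular,
\[
H^3(BC,\C)\;\cong\;H^3(BK,\C).
\]

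Finally I would invoke Borel~\cite{MR0051508}: for any compact connected Lie group $K$, the real cohomology $H^*(BK,\R)$ is a polynomial algebra on finitely many generators, all of even degree. Tensoring with $\C$, the same holds for $H^*(BK,\C)$, so $H^{\mathrm{odd}}(BK,\C)=0$. In particular $H^3(BK,\C)=0$, whence $o(B)=0$ and every Neeb obstruction for smooth principal $C$"-bundles vanishes.

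The hard part will be the first step, namely identifying Neeb's crossed-module obstruction with the pullback of a universal class $o(B)\in H^3(BC,\C)$. I would carry this out by applying Neeb's construction directly to a smooth model of the universal bundle $EC\to BC$ (for instance, via Milnor's infinite join or a simplicial smooth model), defining $o(B)$ to be the resulting obstruction class, and then checking naturality under pullback of bundles. Once that bookkeeping is complete, the rest of the argument is purely a matter of invoking the homotopy equivalence $BK\simeq BC$ and Borel's computation, as sketched above.
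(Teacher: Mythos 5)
Your overall strategy --- transport the problem to the classifying space, reduce to a maximal compact subgroup via Cartan--Iwasawa--Mal'cev \cite{MR0029911}, and invoke Borel's theorem \cite{MR0051508} that $H^*(BK;\Q)$ is a polynomial algebra on even-degree generators --- is the same skeleton as the paper's proof. The two arguments diverge in how they handle the reduction to $BC$ and in which cohomology group the vanishing is established. The paper never defines a ``universal Neeb class'' on $BC$: it first identifies Neeb's class with the image of the sheaf-cohomological coboundary $\delta[P]\in H^2(X,\C^*)\cong H^3(X;\Z)$ under the coefficient map to $H^3(X;\C)$, and then uses naturality of $\delta$ under the (merely continuous) classifying map $f\colon X\to BC$ to conclude that $\delta[P]=f^*\delta[EC]$ factors through $H^2(BC,\C^*)\cong H^3(BC;\Z)$. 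The work then goes into showing that $H^3(BC;\Z)$ is a \emph{torsion} group (which needs a finite-generation argument via Stiefel-manifold approximations in addition to Borel), after which torsion dies in $H^3(X;\C)$. Your version instead shows $H^3(BC;\C)=0$ outright, which is cheaper: by universal coefficients over a field this needs only $H_3(BC;\Q)=0$, i.e.\ Borel, with no finite-generation input. So your cohomological endgame is slightly more economical, at the price of having to carry Neeb's class itself (rather than an integral avatar of it) up to $BC$.

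That last point is where your plan has a real soft spot. You propose to define $o(B)$ by applying Neeb's crossed-module construction to ``a smooth model of $EC\to BC$,'' but $BC$ is not a finite-dimensional manifold in general (Milnor's join is not smooth, and infinite-dimensional models take you outside the setting of the theorem), so it is not clear that Neeb's construction applies to the universal bundle at all. This is avoidable in two ways: either (a) replace $BC$ by finite-dimensional smooth approximations (quotients of Stiefel manifolds, exactly as in the paper's Lemma~\ref{lem:all-torsion}) over which any given bundle on a finite-dimensional base is classified and whose $H^3$ agrees with that of $BC$ in the relevant range, or (b) do what the paper does and route the naturality argument entirely through the connecting map $\delta$ of the sheaf-cohomology sequence, which is natural for arbitrary continuous maps, together with the identification of Neeb's class as the image of $\delta[P]$. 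Without one of these repairs, the ``hard part'' you flag is not merely bookkeeping but an actual gap; with either repair, your argument goes through and is a legitimate, slightly streamlined variant of the paper's proof.
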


For the proof we need two lemmas. Recall that by \cite{grothendieck1957quelques} the central extension \eqref{eqn:central-ext} induces an exact sequence in sheaf cohomology
\[
	\cdots\,\longrightarrow\, H^1(X,\,B)\,\longrightarrow\, H^1(X,\,C) \,\stackrel{\delta}{\longrightarrow}\, 
H^2(X,\,\C^*).
\]
The $C$"-bundle determines a class $[P] \in H^1(X,\,C)$. We will show that $\delta$ takes values in
the torsion subgroup of $H^2(X,\C^*)$. This suffices, since:

\begin{lemma}
Neeb's obstruction class is the image of\/ $\delta[P]$ in
\[
H^2(X,\,\C^*) \,\cong H^3(X;\, \Z)\,\longrightarrow\, H^3(X; \,\C)\, .
\]
\end{lemma}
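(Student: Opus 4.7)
The plan is to identify Neeb's obstruction class with the image of $\delta[P]$ by exhibiting both as explicit Čech cocycles on a common open cover and comparing them term by term.

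First, I would choose a good open cover $\{U_i\}$ of $X$ trivializing $P$ with transition functions $g_{ij}\colon U_{ij}\to C$. After refinement, one obtains local lifts $\tilde g_{ij}\colon U_{ij}\to B$, and the defect
\[
c_{ijk} \coloneqq \tilde g_{ij}\tilde g_{jk}\tilde g_{ik}^{-1}\colon U_{ijk}\to \C^*
\]
is a Čech $2$-cocycle representing $\delta[P]\in H^2(X,\C^*)$. The isomorphism $H^2(X,\C^*)\cong H^3(X,\Z)$ arises from the exponential sequence $0\to\Z\to\C\to\C^*\to 0$: choosing local logarithms $\ell_{ijk}=\tfrac{1}{2\pi i}\log c_{ijk}$, the coboundary $n_{ijkl}=(\check\delta\ell)_{ijkl}$ is an integer $3$-cocycle whose class is the Dixmier--Douady class of the lifting gerbe $\scrG_\pi$. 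Its image in $H^3(X,\C)$ is obtained by forgetting integrality.

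Next, I would unwind Neeb's construction from \cite{MR2208114}. Using the $\Ad|_B$-equivariant splitting \eqref{split} gives a continuous section $s\colon\mathfrak c\to\mathfrak b$. Neeb's crossed-module $3$-cocycle is assembled from the transition data $g_{ij}$ by measuring how far the pulled-back section $s$ fails to be a Lie algebra homomorphism on triple overlaps; concretely, one forms $s(\log g_{ij})$ (or the analogous Maurer--Cartan expression) and takes the crossed-module differential. A direct calculation shows that this cocycle agrees, up to a Čech coboundary of the $\mathfrak b$-valued $2$-cochain built from $s$ and the local lifts, with the complexification of the integer cocycle $n_{ijkl}$ produced in the previous paragraph.

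Equivalently, one can package the comparison as a commutative diagram of connecting homomorphisms
\[
\xymatrix{
H^1(X,C)\ar[r]^-{\delta}\ar@{=}[d] & H^2(X,\C^*)\ar[d]^{\cong} \\
H^1(X,C)\ar[r]^-{\text{Neeb}} & H^3(X,\C),
}
\]
where the right vertical map is precisely $H^2(X,\C^*)\cong H^3(X,\Z)\to H^3(X,\C)$; naturality of connecting maps applied to the morphism of short exact sequences $[0\to\Z\to\C\to\C^*\to 0]\to[1\to\C^*\to B\to C\to 1]$ (encoded via $s$) makes the square commute. The main obstacle will be matching sign and normalization conventions — the $2\pi i$ factor from the exponential map, Neeb's sign convention in the crossed-module differential, and the verification that the various local choices (lifts $\tilde g_{ij}$, branches of logarithm, local splittings of $\beta\colon B\to C$) contribute only Čech coboundaries rather than genuine classes. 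Once this bookkeeping is complete, the lemma follows from the explicit cocycle identification, and the reduction of the rest of Theorem~\ref{thm:obstructions} to showing that $\delta[P]$ is torsion in $H^2(X,\C^*)$ is immediate.
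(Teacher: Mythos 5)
The paper itself offers no proof of this lemma: it is stated as a known identification of Neeb's obstruction with the (complexified) sheaf-theoretic/Dixmier--Douady obstruction, implicitly deferring to \cite{MR2208114} and \cite{gengoux}. Your strategy --- represent $\delta[P]$ by the \v{C}ech defect cocycle $c_{ijk}=\tilde g_{ij}\tilde g_{jk}\tilde g_{ik}^{-1}$, pass through the exponential sequence to an integral $3$-cocycle, and match it against Neeb's crossed-module cocycle --- is the natural way to actually verify the statement, and the first half (that $c_{ijk}$ represents $\delta[P]$ and that $(\check\delta\ell)_{ijkl}$ represents its image under $H^2(X,\C^*)\cong H^3(X;\Z)$) is correct and standard.

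However, as written the proposal has a genuine gap precisely at the step that constitutes the content of the lemma. The assertion that Neeb's cocycle ``agrees, up to a \v{C}ech coboundary \ldots with the complexification of the integer cocycle $n_{ijkl}$'' is exactly what must be proved, and you defer it to ``a direct calculation'' that is never carried out; moreover your description of Neeb's construction is hedged (``one forms $s(\log g_{ij})$ (or the analogous Maurer--Cartan expression)''), which signals that the cocycle being compared has not actually been pinned down. Neeb's class is built from the crossed module associated to the extension, not directly from a section $s$ of $\beta_*$ failing to be a Lie algebra homomorphism, so the comparison requires unwinding his Definition of the obstruction in \cite{MR2208114} rather than guessing its shape. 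Finally, the proposed justification of the commuting square by ``naturality of connecting maps applied to the morphism of short exact sequences $[0\to\Z\to\C\to\C^*\to 0]\to[1\to\C^*\to B\to C\to 1]$'' does not parse: there is no such morphism of exact sequences (it would require compatible maps $\Z\to\C^*$, $\C\to B$, $\C^*\to C$), so this cannot substitute for the missing cocycle computation. To close the gap you should either carry out the explicit comparison of cocycles against Neeb's actual formula, or cite the identification from \cite{MR2208114} and \cite{gengoux} directly, as the paper implicitly does.
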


All topological spaces are assumed to be paracompact.

\begin{definition}\cite[\S 14.4]{MR2456045}
Let $G$ be a topological group. A topological $G$"-principal bundle $EG \,\longrightarrow\, BG$ is \emph{universal} if
every free $G$"-space $E$ admits a $G$"-map $E\,\longrightarrow\, EG$, unique up to $G$"-homotopy. The base space $BG$ 
is the \emph{classifying space} of $G$.
\end{definition}

For any $G$ there exists a universal $G$"-bundle (e.g.~\cite[\S 14.4]{MR2456045}). It is unique up to
$G$"-homotopy equivalence. Applying the definition to the trivial bundle $S^n\times G$ we see that
all $[S^n, EG]$ are singletons. Hence $EG$ is (weakly) contractible.

The homotopy of $BG$ can be approximated be the following method.
Suppose $E\,\longrightarrow\, B$ is a $G$"-bundle with $n$"-connected total space $E$. Then the long
exact sequence of homotopy groups for a fibration shows that $B\,\longrightarrow\, BG$ is $(n+1)$"-connected. In
particular, $H_*(B;\,\Z)\,\cong\, H_*(BG;\,\Z)$ for $*\leq n$. Recall here that a topological space $X$ is \emph{$n$"-connected} if $\pi_0(X) = \pi_1(X) = \cdots = \pi_n(X)=\{0\}$. More generally, a continuous map $f\colon X \,\longrightarrow\, Y$ is said to be \emph{$n$"-connected} if $\pi_0(f), \ldots, \pi_{n-1}(f)$ are bijective and $\pi_n(f)$ is surjective.

\begin{lemma}\label{lem:all-torsion}
For all finite-dimensional connected Lie groups\/ $C$, the cohomology\/
$H^3(BC;\,\Z)$ is a torsion group.
\end{lemma}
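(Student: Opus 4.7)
The plan is to reduce to the case of a compact Lie group and then invoke Borel's classical computation of the rational cohomology of classifying spaces.

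First, I would exploit the Malcev--Iwasawa theorem: every finite-dimensional connected Lie group $C$ is homeomorphic to $K \times \mathbb{R}^n$, where $K \subset C$ is a maximal compact subgroup. In particular, the inclusion $K \hookrightarrow C$ is a homotopy equivalence. Applying the classifying space functor, which preserves homotopy equivalences of topological groups, this yields a weak homotopy equivalence $BK \simeq BC$ and hence an isomorphism $H^3(BC;\mathbb{Z}) \cong H^3(BK;\mathbb{Z})$. So it suffices to prove the statement for compact connected Lie groups.

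Next, I would invoke Borel's theorem \cite{MR0051508}: for a compact connected Lie group $K$ with maximal torus $T$, the rational cohomology $H^*(BK;\mathbb{Q})$ is a polynomial algebra on generators of even degree, concretely the Weyl-group invariants in $H^*(BT;\mathbb{Q}) \cong \mathbb{Q}[x_1,\dots,x_r]$ with $\deg x_i = 2$. In particular $H^{\mathrm{odd}}(BK;\mathbb{Q}) = 0$, so $H^3(BK;\mathbb{Q}) = 0$. By the universal coefficient theorem, $H^3(BK;\mathbb{Z}) \otimes \mathbb{Q}$ embeds into $H^3(BK;\mathbb{Q}) = 0$, forcing $H^3(BK;\mathbb{Z})$ to be a torsion group.

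The main subtlety I expect is justifying the homotopy equivalence $BK \to BC$ at the level of classifying spaces, since a priori this requires either a functorial model such as the Milnor or simplicial bar construction (which clearly takes homotopy equivalences of topological groups to homotopy equivalences of classifying spaces) or a direct argument via the fibration $C/K \to BK \to BC$ with contractible fiber $C/K \simeq \mathbb{R}^n$. Once that is in place, the remainder is a direct citation of Borel and universal coefficients, and the argument is short.
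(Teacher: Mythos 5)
Your reduction to the compact case via Iwasawa--Malcev and the appeal to Borel's theorem both match the paper, but the final step contains a genuine gap: the claim that ``$H^3(BK;\Z)\otimes\Q$ embeds into $H^3(BK;\Q)$'' is \emph{not} a consequence of the universal coefficient theorem for an arbitrary space. The natural comparison map $H^3(X;\Z)\otimes\Q\to H^3(X;\Q)$ kills the entire image of $\operatorname{Ext}^1_\Z\big(H_2(X;\Z),\Z\big)\otimes\Q$, because $\operatorname{Ext}^1_\Z(-,\Q)=0$; when $H_2(X;\Z)$ is not finitely generated this kernel can be a nonzero $\Q$-vector space. Concretely, for a Moore space $M=M(\Z[1/p],2)$ one has $H^3(M;\Q)=0$ but $H^3(M;\Z)\cong\operatorname{Ext}^1_\Z(\Z[1/p],\Z)\cong\Z_p/\Z$, which is not a torsion group. (The coefficient change is unproblematic for \emph{homology}, since $\Q$ is flat; the failure is specific to cohomology and to the $\operatorname{Ext}$ term.) So the vanishing of $H^3(BK;\Q)$ alone does not force $H^3(BK;\Z)$ to be torsion.

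The missing ingredient is that $H_*(BK;\Z)$ is finitely generated in each degree, and supplying it is precisely the extra work in the paper's proof: one embeds $K$ into $O(n)$, lets $K$ act freely on the Stiefel manifold $V_n(\R^N)$, which is $(N-n-1)$-connected, and thereby approximates $BK$ in low degrees by the compact manifolds $V_n(\R^N)/K$, whose homology is finitely generated. Granting this, Borel's theorem together with the homological universal coefficient theorem shows $H_3(BK;\Z)$ is torsion, and the cohomological universal coefficient sequence then identifies $H^3(BK;\Z)$ with the finite group $\operatorname{Ext}^1_\Z\big(H_2(BK;\Z),\Z\big)\cong\operatorname{Tors}H_2(BK;\Z)$. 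To repair your argument, either insert such a finite-type statement for $BK$ (with proof or citation) before invoking universal coefficients, or restructure the last step along the lines above; the rest of your outline is sound.
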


\begin{proof}
By \cite[Theorem~6]{MR0029911} the group $C$ deformation retracts onto its maximal compact connected subgroup $G$. 
Using the long exact sequence of homotopy groups for fibrations, we see that $BG \,\longrightarrow\, BC$ is a weak 
equivalence, so we may assume $C\,=\,G$ is compact. We then have an embedding $\rho\,\colon\, G\,\longrightarrow\, 
O(n)$.

For any $N\in \N$ consider the Stiefel manifold $E=V_n(\R^N)$ with its free $G$"-action by $\rho$. The long exact sequence of homotopy groups for the fibration $O(k) \hookrightarrow O(k+1) \twoheadrightarrow S^k$ combined with $\pi_0(S^k)=\cdots=\pi_{k-1}(S^k)$ shows that $O(k) \to O(k+1)$ is $(k-1)$-connected. Iterating, we find $O(N-n) \to O(N)$ to be $(N-n-1)$-connected. Applying the long exact sequence of homotopy groups to the fibration $O(N-n) \hookrightarrow O(N) \twoheadrightarrow E$ of the Stiefel manifold implies that $E$ is $(N-n-1)$-connected.
Then $E\,\longrightarrow\, E/G\,=\,B$ is a $G$"-bundle with $N-n-1$-connected total space, so we may apply the remarks preceeding the lemma. Since $B$ is a compact manifold and  $N$ is arbitrary we conclude that all homology groups $H_n(BG;\,\Z)\,\cong\, H_n(B;\,\Z)$ are finitely generated.

According to Borel \cite{MR0051508}, the rational cohomology $H^*(BG;\,\Q)$ is a polynomial ring on even degree 
generators. Hence $H^3(BG;\,\Q)\,=\,0$ and so $H_3(BG;\Z)$ is a torsion group by the homological universal coefficient 
theorem.
Now apply the cohomological universal coefficient theorem
\[
	0 \longrightarrow \operatorname{Ext}^1_\Z(H_2(BG;\,\Z), \Z) \longrightarrow H^3(BG;\,\Z) \longrightarrow \Hom_\Z(H_3(BG;\,\Z), \Z) \longrightarrow 0.
\]
Putting $H_2(BG;\,\Z)\,=\,\Z^a \oplus T$ and $H_3(BG;\,\Z)\,=\,T'$ for the torsion subgroups $T,\, T'$ we see 
$H^3(BG;\,\Z)\,\cong\, T$ using the standard properties of $\operatorname{Ext}^1_\Z$.
\end{proof}

\begin{proof}[Proof of Theorem~\textup{\ref{thm:obstructions}}]
We claim that each $v=\delta(u)$ is torsion, where $u\,=\,[P]\in H^1(X,\,C)$ corresponds to some $C$"-bundle $P$. There 
is a 
map $f\,\colon \,X\,\longrightarrow\, BC$ with $P\cong f^*[EC]$.
By using naturality we have a commutative diagram
\[\xymatrix{
	H^1(X,\,C)\ar[r]^-\delta & H^2(X,\,\C^*)\\
	H^1(BC,\,C)\ar[r]_-\delta\ar[u]^{f^*} & H^2(BC,\,\C^*)\ar[u]_{f^*}
}\]
It follows that $v\,=\,\delta(u)\,=\,\delta f^*[EC] \,=\, f^*\delta[EC]$ factors through the torsion group 
$H^2(BC,\,\C^*)\,\cong\, H^3(BC;\,\Z)$ (exponential sequence and Lemma~\ref{lem:all-torsion}).
\end{proof}

\begin{remark}\label{rem:infinite-dim}
When we allow infinite-dimensional Lie groups, the situation is drastically different.
For a separable complex Hilbert space $\mathcal{H}$, Kuiper's theorem asserts that the bounded invertible operators $GL(\mathcal{H})$ are contractible. Consider
\[
	1 \longrightarrow \C^* \longrightarrow GL(\mathcal{H}) \longrightarrow PGL(\mathcal{H}) \longrightarrow 1.
\]
Then the long exact sequence of homotopy groups shows that $PGL(\mathcal{H})=K(\Z,2)$ and  $BPGL(\mathcal{H})=K(\Z,3)$. By the exponential sequence we have a diagram
\[\xymatrix{
H^1(X,\,PGL(\mathcal{H}))\ar[r]^-\delta\ar[d] & H^2(X,\,\C^*)\ar[d]^\cong\\
[X,BPGL(\mathcal{H})]\ar[r] & H^3(X;\,\Z)
}\]
with lower horizontal isomorphism. Hence $\delta$ is surjective.
\end{remark}
%
%

\bibliographystyle{amsplain}

\end{document}